\pgfplotsset{compat=1.18}
\newcommand{\ls}{\leqslant}
\newcommand{\gr}{\geqslant}
\newcommand{\E}{\mathbb{E}}
\newcommand{\R}{\mathbb{R}}
\DeclareMathOperator{\vol}{vol}
\numberwithin{equation}{section} 
\newtheorem{theorem}{Theorem}[section] 
\newtheorem{lemma}[theorem]{Lemma}
\newtheorem{proposition}[theorem]{Proposition}
\theoremstyle{remark}
\newtheorem{remark}[theorem]{Remark}
\newtheorem{conjecture}[theorem]{Conjecture}
\theoremstyle{definition}
\title{On Extremal Volume Projections of the Simplex and the Cube}
\author{Christos Pandis}
\address{Department of Mathematics \& Applied Mathematics, University of Crete, Voutes Campus, 70013 Heraklion, Greece}
\email{chrpandis@gmail.com}
\date{\today}
\begin{document}

\maketitle
\begin{abstract}
Let $\Delta_n$ and $Q_n$ denote the regular $n$-simplex of side length $\sqrt{2}$ embedded in $\mathbb{R}^{n+1}$ and the volume one cube in $\mathbb{R}^n$, respectively. We derive a closed-form formula for the hyperplane volume projections of $\Delta_n$, which also yields the directions achieving the extremal volume. Moreover, we revisit the problem of extremal planar projections of $Q_n$. In addition, we present generalizations within the framework of $L_p$-projection bodies.
\end{abstract}

\section{Introduction and Results}
The study of lower-dimensional sections and projections of convex bodies is a central topic in asymptotic geometric analysis and high-dimensional geometry.  This has motivated the development and interplay of a wide range of analytic, geometric, and probabilistic techniques.  Interestingly, the nature of the methods often differs depending on whether one studies sections or projections: while problems involving sections tend to rely more heavily on analytic tools, projection problems often involve more algebraic and combinatorial approaches. Given the vast literature on adjacent topics, we refer the interested reader to the recent comprehensive survey \cite{nayar2023extremal}, as well as to \cite{Zhong} for the case of cubes specifically, for a detailed historical account, the current state of the art, and numerous intriguing open problems.
Throughout this work we denote $\Delta_n$  as the  regular $n$-dimensional simplex of side length $\sqrt{2}$ embedded in $\mathbb{R}^{n+1}$, $B_1^n=\text{conv}\{ \pm e_1,\ldots, \pm e_n\}$ the cross-polytope in $\mathbb{R}^n$ and the cube $B_{\infty}^n=[-1,1]^n$. The cube often warrants the volume 1 normalisation as~$Q_n=\left[-\frac{1}{2},\frac{1}{2} \right]^n$.

\subsection{Extremal directions for hyperplane projections of $\Delta_n$ }
Let $\Delta_n$ denote the regular $n$-dimensional simplex of side length $\sqrt{2}$, which we view through its usual embedding in $\mathcal{H}=\left \{\sum_{i=1}^{n+1}x_i=1 \right\}$ in $\mathbb{R}^{n+1}$, namely
\[
\Delta_n = \left\{x\in\mathbb{R}^{n+1}: x_j\gr 0, \sum_{j=1}^{n+1}x_j=1\right\}.
\] 
The problem of interest can be described as follows: Find
\[
\max_{H}  /  \min_{H}\, \operatorname{vol}_{n-1}\left(\operatorname{Proj}_{H}\Delta_n \right),
\]
where the maximum (respectively, minimum) is taken over all affine subspaces 
$H$ of $\mathcal{H}$ of relative codimension one. Since volume of projections is invariant under translations we may assume that they pass  through the barycenter $\text{bar}(\Delta_n)$. Every such hyperplane extends to a codimension $1$ subspace of $\mathbb{R}^{n+1}$ by taking the affine hull of $H$ and the origin, yielding a hyperplane $a^{\perp}$ with $a$ lying in $\mathcal{H}'=\left\{\sum_{i=1}^{n+1}x_i=0 \right\}$. The main difference with the respective slicing problem (see \cite{webb1996central}) is that the barycenter condition is not imposed a priori but arises naturally. Thus, the problem reformulates as follows:
\[
\max_{a\in \mathcal{H}'\cap S^{n}}  /  \min_{a\in \mathcal{H}'\cap S^{n}}\, \operatorname{vol}_{n-1}\left(\operatorname{Proj}_{a^{\perp}\cap \mathcal{H}}\Delta_n \right).
\]

Martini and Weissbach showed in \cite{weissbach1984besten} that the hyperplane containing the smallest projection of $\Delta_n$  is normal to an edge of $\Delta_n$  (see \cite[pp. 164-166]{weissbach1984besten} while largest hyperplane projection is the
transform of a 1-simplex with even weights (see \cite[p. 165]{weissbach1984besten} and \cite{filliman1990extreme}). In other words the largest $(n-1)$-dimensional volume of $\Delta_n$  is obtained by projecting in the direction of the sum of any $\frac{n+1}{2}$  of the normals to its facets, if $n$ is odd and the sum of any of $\frac{n}{2}$ or $\frac{n+2}{2}$ of the normals to the facets if $n$ is even. For example, the largest $3$-dimensional projection of  $\Delta_4$ is obtained  by projecting in the direction $a=\frac{1}{\sqrt{30}}\left(-3,-3,2,2,2 \right)$ and the projection lies in the $3$-plane in $\R^5$ orthogonal to $\text{span}\{a,(1,1,1,1,1)\}$. 

In a different approach, in \cite{martini1992quermasses} Martini and Weissbach established that for an arbitrary $d$-simplex $S$ ($d\gr2$) and for every  direction $a\in\mathbb{S}^{d-1}$ 
\begin{equation}\label{width-proj}
\frac{\operatorname{vol}_{d-1}\left(\text{Proj}_{a^{\perp}}S \right)}{g(DS,a)}=d\operatorname{vol}_d(S),
\end{equation}
where $g(K,a):=\min \{\rho>0:\, a\in \rho K \}$ is the distance function and $DK:=K-K$ is the difference body. They relied on the fact that extremal values of $\frac{1}{g(DK,a)}$ and $b(K,a)$ coincide, where $b(K,a):=h_{K}(a)+h_{K}(-a)$ gives the width of $K$ in direction $a$, that is $\max b(K,a)=\max \frac{1}{g(DK,a)}$ and $\min b(K,a)=\min \frac{1}{g(DK,a)}$.  Thus, establishing bounds using the width $b(S,a)$. For a general characterization of \eqref{width-proj} see \cite{martini1991convex} and concerning the extremal width of the simplex see \cite{alexander1977width,gritzmann1992inner,har2023width,weissbach1988schranken}. 

The projections of the regular simplex onto
certain orthogonal complementary subspaces are conjectured to yield minimal and maximal volume.
\begin{conjecture}[Filliman \cite{filliman1990extreme}, or see \cite{nayar2023extremal}] \label{conj:Filliman}
Fix $1 \le k \le n$. Let $H_{\ast}$ be a k-dimensional subspace in $\mathbb{R}^n$ such that $T_{\ast} = \text{Proj}_{H_{\ast}}(\Delta_n)$ is a $k$-dimensional simplex, with the vertices of $\Delta_n$ projecting only onto the vertices of $T_{\ast}$, as evenly as possible: for each $i \le k + 1$, letting $w_i$ be the number of vertices of $\Delta_n$ projecting onto vertex $i$ of $T_{\ast}$, we have that
\[
w_i = \begin{cases} \ell + 1, & 1 \le i \le r, \\ \ell, & r < i \le k + 1, \end{cases}
\]
where we divide $n + 1$ by $k + 1$ with the remainder $r \in \{0, \ldots, k\}$, $n + 1 = (k + 1)\ell + r$. Then
\[
\min_{\substack{H \subset \mathbb{R}^n \\ \dim H=k}} \text{vol}_k(\text{Proj}_H(\Delta_n)) = \text{vol}_k(T_{\ast}).
\]
Moreover, the polytope $T^{\ast} = \text{Proj}_{H_{\ast}^{\perp}}(\Delta_n)$ is conjectured to maximise the volume of projections onto the $n - k$ dimensional subspaces,
\[
\max_{\substack{H \subset \mathbb{R}^n \\ \dim H=n-k}} \text{vol}_{n-k}(\text{Proj}_H(\Delta_n)) = \text{vol}_{n-k}(T^{\ast}).
\]
\end{conjecture}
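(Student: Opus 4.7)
The plan is to phrase the problem as an optimization on the Grassmannian $G(k,n)$ of $k$-dimensional subspaces of $\mathcal{H}'$. Given $H\in G(k,n)$ with orthogonal projection $\pi_H$, the projected body $\operatorname{Proj}_H\Delta_n=\operatorname{conv}\{\pi_H(e_1),\ldots,\pi_H(e_{n+1})\}$ is a polytope in $H\simeq\mathbb{R}^k$ with at most $n+1$ vertices. Applying Cauchy--Binet to a regular triangulation of this point configuration yields the $k$-volume as a positive sum of squared Pl\"ucker minors of $\pi_H$, giving a smooth function $F\colon G(k,n)\to\mathbb{R}_{>0}$ invariant under the $S_{n+1}$-action permuting the standard basis. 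The candidate subspace $H_\ast$ of the conjecture, coming from a partition $[n+1]=\bigsqcup_{j=1}^{k+1} I_j$ with $|I_j|=w_j$, is stabilized by the wreath-product $\prod_j S_{w_j}$ acting on the blocks and therefore is automatically a critical point of $F$; its value is computable via the Cayley--Menger formula applied to the block centroids.

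For part (a), the strategy proceeds in two stages. First, reduce to subspaces $H$ for which $\operatorname{Proj}_H\Delta_n$ is itself a $k$-simplex (i.e.\ the projected vertices cluster into exactly $k+1$ groups), via a shadow-system deformation along a one-parameter family in $G(k,n)$ that merges two projected vertices without increasing $F$. Once reduced, the volume becomes an explicit rational function of the integer weights $(w_1,\ldots,w_{k+1})$, and one checks its Schur-concavity in these weights to single out the most balanced partition, which is the configuration of the conjecture. For part (b), I would invoke the Filliman-type section--projection duality for the regular simplex, which relates $\operatorname{vol}_{n-k}(\operatorname{Proj}_{H^\perp}\Delta_n)$ to a constant multiple of $\operatorname{vol}_k(\Delta_n\cap H)$; the maximization of $(n-k)$-projections thereby converts into the maximization of $k$-sections of $\Delta_n$, where an analogous symmetry-based argument centred on $H_\ast$ and the Webb-type section formulas can in principle be brought to bear.

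The principal obstacle, and the reason the conjecture has resisted a general proof for $2\ls k\ls n-2$, is the reduction to simplex projections in part (a): no shadow-system or rearrangement inequality is currently known to force a minimizer of $F$ to be such a projection, and generic critical points of $F$ correspond to polytope projections with strictly more than $k+1$ vertices. A plausible workaround is a Hessian analysis showing that non-simplex critical points of $F$ are unstable along directions transverse to the $S_{n+1}$-orbit, but the large stabilizer groups at those points make the spectral computation delicate. The extreme cases $k\in\{1,n-1\}$ admit ad hoc treatments via width-of-simplex and hyperplane-projection formulas (the latter being one of the main results of this paper), but everything in between would seem to require a genuinely new idea.
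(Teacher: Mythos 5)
This statement is Conjecture~\ref{conj:Filliman}, an open problem of Filliman: the paper does not prove it and does not claim to, recording only the known partial cases ($k=1,2,n-1$ for all $n$, plus some small-dimensional cases) and, via Theorem~\ref{main thm} and Propositions~\ref{sum min}, \ref{sum max}, an independent confirmation of the hyperplane case. So any submission here has to be judged as an attempted proof of an open conjecture, and yours is not a proof but a programme with the decisive step missing — a gap you yourself flag. Concretely: the reduction asserting that a minimizer of $F$ on the Grassmannian may be deformed, via a shadow-system or merging move that does not increase $F$, to a subspace for which $\operatorname{Proj}_H\Delta_n$ is a $k$-simplex is precisely what is not known; without it the subsequent finite optimization over weight vectors $(w_1,\ldots,w_{k+1})$ and the Schur-concavity check (which you also do not carry out) never get started. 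Likewise, the observation that $H_\ast$ is a critical point of $F$ by virtue of its $\prod_j S_{w_j}$-stabilizer gives criticality only, and $F$ has many symmetric critical points, so this does not discriminate the conjectured minimizer.

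Part (b) contains an additional unsupported claim: there is no ``Filliman-type section--projection duality'' identifying $\operatorname{vol}_{n-k}(\operatorname{Proj}_{H^\perp}\Delta_n)$ with a constant multiple of $\operatorname{vol}_k(\Delta_n\cap H)$. The complementary-subspace volume identity quoted in the paper (Theorem~\ref{symmetry cube}, McMullen and Chakerian--Filliman) is a special feature of the cube, i.e.\ of zonotopes, and fails for the simplex; already for $k=n-1$ the extremal hyperplane projections of $\Delta_n$ found in this paper do not correspond to extremal one-dimensional sections in the way such a duality would force. So the maximization half of the conjecture cannot be converted into a section problem by the route you describe, and even if it could, the extremal $k$-sections of the simplex in that generality are themselves not known. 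In short: the obstacle you identify at the end of your proposal is real, it is the whole problem, and nothing in the proposal overcomes it.
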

Filliman developed exterior algebra techniques in \cite{filliman1992volume} and used them in \cite{filliman1990extreme} (see also \cite{filliman1988largest,filliman1990exterior}) to confirm this conjecture in the following cases, for the minimum: $k = 1, 2, n-1$ and arbitrary $n$, as well as $n \le 6$ and arbitrary $k$, and for the maximum: $k = 1, 2, n - 1$ and arbitrary $n$, $k = n - 2$ and $n \le 8$, as well as $k = 3$ and $n = 6$. 

Closed-form expressions for the hyperplane projections of other polytopes, such as the unit cube \(Q_n\) and the cross-polytope $B_1^n$, are well known (see below as well as \cite{nayar2023extremal}). By contrast, no closed-form formula is known for the regular simplex $\Delta_n$.

In this work we give a direct proof identifying the extremal hyperplane projections of the simplex via a closed-form volume formula and the $\ell_1$-norm of the  direction; this approach also determines the directions that realize the extremal volumes.  Our main theorem states the following:
\begin{theorem}\label{main thm}
Let $a\in \mathbb{S}^{n}\cap \mathcal{H}'$, that is $\sum_{i=1}^{n+1}a_i=0$ and $\sum_{i=1}^{n+1}a_i^2=1$. Then,
\begin{equation}\label{formula proj simp}
    \operatorname{vol}_{n-1}\left(\operatorname{Proj}_{a^{\perp}\cap\mathcal{H}}\Delta_n \right)=\frac{1}{2}\frac{\sqrt{n+1}}{(n-1)!}\sum_{j=1}^{n+1} \left|a_j\right|,
\end{equation}
\end{theorem}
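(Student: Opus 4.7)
The plan is to apply Cauchy's projection formula (equivalently, the cosine formula for the surface area projection) to the $n$-simplex viewed intrinsically in the affine subspace $\mathcal{H}$. Since $\Delta_n$ is a polytope, its $(n-1)$-dimensional projection onto $a^{\perp}\cap\mathcal{H}$ has volume
\[
\operatorname{vol}_{n-1}(\operatorname{Proj}_{a^{\perp}\cap\mathcal{H}}\Delta_n)=\frac{1}{2}\sum_{j=1}^{n+1}|\langle a,\nu_j\rangle|\,\operatorname{vol}_{n-1}(F_j),
\]
where $F_1,\dots,F_{n+1}$ are the facets of $\Delta_n$ inside $\mathcal{H}$ and $\nu_j$ is the outward unit normal of $F_j$ computed within $\mathcal{H}$. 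Thus the entire proof reduces to two explicit computations: identifying the $\nu_j$ and measuring the $F_j$.

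First I would describe the facets. Each facet $F_j$ is the face of $\Delta_n$ obtained by setting $x_j=0$, i.e.\ the convex hull of the remaining vertices $\{e_i : i\neq j\}$. This is again a regular simplex of side length $\sqrt{2}$, now of dimension $n-1$, so a standard computation (induction on $n$, or the formula $\operatorname{vol}_n(\Delta_n)=\sqrt{n+1}/n!$) gives $\operatorname{vol}_{n-1}(F_j)=\sqrt{n}/(n-1)!$. Next, since $-e_j$ is an outward normal to $F_j$ when viewed in the ambient $\mathbb{R}^{n+1}$, its intrinsic counterpart in $\mathcal{H}$ is obtained by projecting $-e_j$ onto $\mathcal{H}'=\{\sum x_i=0\}$, giving the vector $-e_j+\tfrac{1}{n+1}\mathbf{1}$. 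A quick norm calculation shows $\bigl\|{-e_j+\tfrac{1}{n+1}\mathbf{1}}\bigr\|=\sqrt{n/(n+1)}$, and hence
\[
\nu_j=\frac{1}{\sqrt{n(n+1)}}\bigl(\mathbf{1}-(n+1)e_j\bigr).
\]

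Then I would compute the inner products. Because $a\in\mathcal{H}'$, we have $\langle a,\mathbf{1}\rangle=0$, so
\[
\langle a,\nu_j\rangle=-\frac{n+1}{\sqrt{n(n+1)}}\,a_j=-\sqrt{\tfrac{n+1}{n}}\,a_j.
\]
Substituting into Cauchy's formula,
\[
\operatorname{vol}_{n-1}(\operatorname{Proj}_{a^{\perp}\cap\mathcal{H}}\Delta_n)=\frac{1}{2}\sum_{j=1}^{n+1}\sqrt{\tfrac{n+1}{n}}\,|a_j|\cdot\frac{\sqrt{n}}{(n-1)!}=\frac{1}{2}\,\frac{\sqrt{n+1}}{(n-1)!}\sum_{j=1}^{n+1}|a_j|,
\]
as claimed.

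There is no real obstacle — the argument is a direct instance of the Cauchy/cosine formula — but the one subtlety worth highlighting is that the outward normals and volumes must be computed intrinsically in $\mathcal{H}$ rather than in $\mathbb{R}^{n+1}$, which is why the ambient outer normals $-e_j$ must first be projected onto $\mathcal{H}'$ before normalization. Once this is done carefully, the factors of $\sqrt{n}$ and $\sqrt{(n+1)/n}$ from the facet volume and the normal collapse into the clean expression $\sqrt{n+1}/(n-1)!$, producing the stated closed form. In particular, since the whole $a$-dependence is through $\|a\|_1$, the extremal directions are immediately identified by maximizing/minimizing $\sum|a_j|$ subject to $\sum a_j=0$ and $\sum a_j^2=1$.
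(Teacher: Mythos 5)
Your proof is correct and follows essentially the same route as the paper: both apply Cauchy's projection formula for polytopes intrinsically in $\mathcal{H}$, use the facet volume $\sqrt{n}/(n-1)!$ and the unit normals $\nu_j=\frac{1}{\sqrt{n(n+1)}}\bigl(\mathbf{1}-(n+1)e_j\bigr)$, and exploit $\sum_j a_j=0$ to get $|\langle a,\nu_j\rangle|=\sqrt{\tfrac{n+1}{n}}\,|a_j|$. Your explicit derivation of the intrinsic normals by projecting $-e_j$ onto $\mathcal{H}'$ is a welcome clarification but not a different argument.
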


\begin{proposition}\label{sum min}
Let $a_1, \ldots, a_{n}$ be real numbers satisfying
\[
a_1 + \ldots + a_{n} = 0.
\]
Then, the following inequality holds:
\[
\sum_{j=1}^{n} |a_j| \geq \sqrt{2} \left( a_1^2 + \ldots + a_{n}^2 \right)^{1/2}.
\]
\end{proposition}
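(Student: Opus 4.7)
The plan is to split the coordinates by sign and exploit the constraint $\sum a_i = 0$ to turn a bound on $\ell_2$ by $\ell_1$ into an elementary comparison between $\sum b_i^2$ and $(\sum b_i)^2$ for nonnegative entries.

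First I would set $P = \{i : a_i > 0\}$, $N = \{i : a_i < 0\}$, and define $S = \sum_{i \in P} a_i$. The hypothesis $a_1 + \cdots + a_n = 0$ then gives $\sum_{i \in N} |a_i| = S$, so that $\sum_{j=1}^n |a_j| = 2S$. Thus the inequality to be proved reduces, after squaring, to
\[
4S^2 \;\geq\; 2 \sum_{j=1}^n a_j^2, \qquad \text{i.e.,} \qquad \sum_{j=1}^n a_j^2 \;\leq\; 2S^2.
\]

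Next I would use the elementary fact that for any nonnegative reals $b_1, \dots, b_k$,
\[
\sum_{i=1}^k b_i^2 \;\leq\; \Bigl(\sum_{i=1}^k b_i\Bigr)^{2},
\]
since the right-hand side expands as $\sum b_i^2 + 2\sum_{i<j} b_i b_j$ with the cross terms nonnegative. Applying this separately to the positive parts $\{a_i\}_{i\in P}$ and to the absolute values $\{|a_i|\}_{i \in N}$ yields
\[
\sum_{i \in P} a_i^2 \;\leq\; S^2 \qquad\text{and}\qquad \sum_{i \in N} a_i^2 \;\leq\; S^2,
\]
and adding these gives the required bound $\sum a_j^2 \leq 2S^2$. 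Taking square roots and recalling $2S = \sum|a_j|$ finishes the proof.

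There is no real obstacle; the only thing worth noting is the equality case, which occurs precisely when each of the two bounds above is tight, i.e.\ when at most one $a_i$ is strictly positive and at most one strictly negative. Together with $\sum a_i = 0$ this forces $a$ to be (up to permutation) of the form $(t, -t, 0, \ldots, 0)$, matching the equality case we expect from Theorem~\ref{main thm} when comparing with the minimum-projection configuration identified by Martini and Weissbach.
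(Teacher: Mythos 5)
Your proof is correct, and it takes a genuinely different route from the paper's. The paper argues by induction on $n$: a base case reduced to a one-variable inequality, and an inductive step that merges two coordinates via $|a_n|+|a_{n+1}|\ge |a_n+a_{n+1}|$ and the comparison $(a_n+a_{n+1})^2\ge a_n^2+a_{n+1}^2$, which requires $a_na_{n+1}\ge 0$ and hence tacitly uses that among at least three reals summing to zero (not all zero) two must share a sign, after relabelling. You instead split the coordinates by sign, note $\sum_j|a_j|=2S$ with $S=\sum_{i\in P}a_i$, and apply the elementary bound $\sum_i b_i^2\le\bigl(\sum_i b_i\bigr)^2$ for nonnegative $b_i$ to each sign class, giving $\sum_j a_j^2\le 2S^2$. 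This avoids induction entirely, works uniformly for all $n$ (including the degenerate all-zero case), and delivers the equality characterization $(t,-t,0,\ldots,0)$ up to permutation directly, which the inductive proof leaves implicit; it is also methodologically parallel to the paper's own proof of Proposition~\ref{sum max}, where a sign split plus Cauchy--Schwarz is used, so your argument would let both extremal statements be treated by one uniform device. The paper's induction, for its part, needs no case analysis on the sign pattern beyond the relabelling step, but buys nothing extra here.
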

\begin{proposition}\label{sum max}
Let $a_1, \ldots, a_{n} \in \mathbb{R}$, with $n \gr 2$, be real numbers satisfying
\[
a_1 + \ldots + a_{n} = 0 \quad \text{and} \quad a_1^2 + \ldots + a_{n}^2 = 1.
\]
Then, the quantity
\[
\sum_{j=1}^{n} |a_j|
\]
is maximized at the “half-plus/half-minus” vector
\[
\left( \underbrace{\frac{1}{\sqrt{n}}, \ldots, \frac{1}{\sqrt{n}}}_{\frac{n}{2} \text{ times}}, \underbrace{-\frac{1}{\sqrt{n}}, \ldots, -\frac{1}{\sqrt{n}}}_{\frac{n}{2} \text{ times}} \right),
\]
when $n$ is even, and at the at the $(\tfrac{n}{2}-1, \tfrac{n}{2}+1)$–split vector

$$\left(\underbrace{\sqrt{\frac{n+1}{n(n-1)}}, \ldots, \sqrt{\frac{n+1}{n(n-1)}}}_{\frac{n-1}{2} \text{  times}}, \underbrace{-\sqrt{\frac{n-1}{n(n+1)}}, \ldots, -\sqrt{\frac{n-1}{n(n+1)}}}_{\frac{n+1}{2} \text{ times}}\right)$$
when $n$ is odd.
\end{proposition}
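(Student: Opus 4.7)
The plan is to reduce this constrained maximization to a clean two-parameter discrete optimization via Cauchy-Schwarz. First I would split coordinates by sign: let $P=\{i:a_i>0\}$, $N=\{i:a_i<0\}$, with $p=|P|$, $q=|N|$, and set $s=\sum_{i\in P}a_i$. The zero-sum constraint forces $\sum_{i\in N}a_i=-s$, so $\sum_{j=1}^n|a_j|=2s$. Hence the problem becomes: maximize $s$ subject to $\sum_P a_i^2+\sum_N a_i^2=1$, $\sum_P a_i=s$, $\sum_N a_i=-s$.

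Next, Cauchy-Schwarz applied separately to the positive and negative parts gives $s^2\le p\sum_P a_i^2$ and $s^2\le q\sum_N a_i^2$. Adding the resulting lower bounds $\sum_P a_i^2\ge s^2/p$ and $\sum_N a_i^2\ge s^2/q$ and invoking the unit-sphere constraint yields
\[
1\;\ge\;\frac{s^2}{p}+\frac{s^2}{q}\;=\;s^2\cdot\frac{p+q}{pq},\qquad\text{so}\qquad s^2\le\frac{pq}{p+q}.
\]
Then I would optimize the right-hand side over integers $p,q\ge 1$ with $p+q\le n$ (the case $p=0$ or $q=0$ gives $s=0$ and can be discarded). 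Since $\partial_p\!\big(pq/(p+q)\big)=q^2/(p+q)^2>0$ (and symmetrically in $q$), the quantity is increasing in each argument, so the maximum occurs at $p+q=n$; with that constraint, $pq$ is a concave function of $p$ maximized when $p,q$ are as equal as possible. This produces the two cases: $p=q=n/2$ when $n$ is even, and $\{p,q\}=\{(n-1)/2,(n+1)/2\}$ when $n$ is odd.

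Finally I would trace the equality conditions. Cauchy-Schwarz equality forces every positive $a_i$ to take a common value and every negative $a_i$ to take a common value, while the remaining coordinates (if any) are zero; but the constraint $p+q=n$ just derived eliminates the zero-coordinate possibility at any maximizer. Setting those common positive and negative values from $s$ and the constraints $\sum_P a_i=s$, $\sum_N a_i=-s$ together with $\sum a_i^2=1$ recovers exactly the two candidate vectors in the statement, and a direct substitution confirms they saturate the bound ($\sum|a_j|=\sqrt n$ in the even case, $\sqrt{(n^2-1)/n}$ in the odd case). I do not anticipate a serious obstacle: the one small nuance is ruling out maximizers with $p+q<n$, which is handled entirely by the strict monotonicity of $pq/(p+q)$ noted above.
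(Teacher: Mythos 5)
Your proof is correct and essentially the same as the paper's: the paper also splits the coordinates by sign, applies Cauchy--Schwarz to each group to get the bound $2\sqrt{k(n-k)/n}$, and optimizes over the split size $k$ (handling the even case by a single direct Cauchy--Schwarz). Your unified treatment of both parities and your explicit monotonicity argument ruling out zero coordinates ($p+q<n$) are minor refinements of the same argument, not a different route.
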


Using Theorem \ref{main thm}, we establish in Propositions \ref{sum min} and \ref{sum max} that the minimum is attained at $$ \left(\frac{1}{\sqrt{2}}, -\frac{1}{\sqrt{2}}, 0, \ldots, 0\right),$$
while the maximum is achieved at  at the “half-plus/half-minus” vector $$ \left(\underbrace{\frac{1}{\sqrt{n+1}}, \dots, \frac{1}{\sqrt{n+1}}}_{\frac{n+1}{2} \text{ times}}, \underbrace{-\frac{1}{\sqrt{n+1}}, \dots, -\frac{1}{\sqrt{n+1}}}_{\frac{n+1}{2}  \text{ times}}\right),$$ when \(n+1\) is even, and at $$\left(\underbrace{\sqrt{\frac{n+2}{n(n+1)}}, \ldots, \sqrt{\frac{n+2}{n(n+1)}}}_{\frac{n}{2} \text{ times}}, \underbrace{-\sqrt{\frac{n}{(n+1)(n+2)}}, \ldots, -\sqrt{\frac{n}{(n+1)(n+2)}}}_{\frac{n+2}{2} \text{ times}}\right)$$
when $n+1$ is odd. A direct computation reveals the extremal values
\[
V_{\text{min}}=\frac{1}{\sqrt{2}}\frac{\sqrt{n+1}}{(n-1)!}
\]
and 
\[
V_{\text{max}}=\frac{1}{2(n-1)!}\begin{cases} n+1, & \text{if } n \text{ is odd} \\ \sqrt{n(n+2)}, & \text{if } n \text{ is even} \end{cases}  \quad.
\]
Since $\Delta_n$  is permutational invariant these directions are determined only up to permutation and also under sign reflections due to \eqref{formula proj simp}, that is a direction $x$ and $-x$ yield the same value.

\begin{remark}
 Moreover, we also address the question of determining the extremal directions for the width of $\Delta_n$. For  $a\in \mathbb{S}^{n}\cap \mathcal{H}'$, that is $\sum_{j=1}^{n+1}a_j=0$ and $\sum_{j=1}^{n+1}a_j^2=1$, one gets 
\begin{equation}\label{g(D D^n,u)}
g(D\Delta_n,a)=\frac{1}{2}\sum_{j=1}^{n+1}|a_j|.
\end{equation}
To see this we observe that 
\begin{equation}\label{D^n-D^n}
\Delta_n-\Delta_n=\left \{x\in \mathbb{R}^{n+1}:\, \sum_{j=1}^{n+1}x_j=0 \,\,\,\text{and}\,\,\, \sum_{j=1}^{n+1}|x_j|\ls2 \right\} \subset \mathcal{H}'.
\end{equation}
For the non-obvious inclusion, notice that if $x\in \mathbb{R}^{n+1}$ such that $\sum_{j=1}^{n+1}x_j=0$ and $\sum_{j=1}^{n+1}|x_j|\ls2$ then $$x_j=x_j^{+}-x_j^{-}=x_j^{+}-\frac{1}{n+1}(s-1)-[x_j^{-}-\frac{1}{n+1}(s-1)],$$ where $x^{+}:=\max\{x,0\}$, $x^{-}:=\max\{-x,0 \}$ and $s:=\sum x_j^{+}=\sum x_j^{-}=\frac{\sum|x_j|}{2}\ls1$. 

Combining \eqref{formula proj simp} and \eqref{g(D D^n,u)} we obtain relation~\eqref{width-proj} for $\Delta_n$.

In fact, relation~\ref{g(D D^n,u)} together with Propositions \ref{sum min} and \ref{sum max}  suggests that the extremal width of $\Delta_n$, since one only need to consider directions that are parallel to $\mathcal{H}$, is attained at
    \[
    \max_{a\in \mathbb{S}^{n}\cap \mathcal{H}'}b_{\Delta_n}(a)=\max_{a\in \mathbb{S}^{n}\cap \mathcal{H}'}\frac{1}{g(D\Delta_n,a)}= \sqrt{2}
    \]
    and 
    \[
    \min_{a\in \mathbb{S}^{n}\cap \mathcal{H}'} b_{\Delta_n}(a)=\min_{a\in \mathbb{S}^{n}\cap \mathcal{H}'}\frac{1}{g(D\Delta_n,a)} =\begin{cases} \frac{2}{\sqrt{n+1}}, & \text{if } n \text{ is odd} \\ 2\sqrt{\frac{n+1}{n(n+2)}}, & \text{if } n \text{ is even} \end{cases} 
    \]
   where the directions attaining them are precisely the opposites of those corresponding to the projections.
\end{remark}

\subsection{Extremal planar projections of $Q_n$}
We begin with a   striking and remarkable feature of the cube: its projections onto orthogonal complementary subspaces have the same volume: 
\begin{theorem}[McMullen~\cite{mcmullen1984volumes}; Chakerian--Filliman~\cite{chakerian1986measures}]\label{symmetry cube}
Let $1\ls k \ls n$. For every $k$-dimensional subspace $H$ in $\mathbb{R}^n$, we have 
\[
\operatorname{vol}_k(\operatorname{Proj}_H Q_n)=\operatorname{vol}_{n-k}(\operatorname{Proj}_{H^{\perp}}Q_n).
\]
\end{theorem}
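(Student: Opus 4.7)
The plan is to represent the cube as a zonotope and invoke the classical McMullen formula for the volume of a zonotope in terms of maximal minors. Writing
\[
Q_n = \sum_{i=1}^n \left[-\tfrac{1}{2},\tfrac{1}{2}\right] e_i
\]
as a Minkowski sum of segments, the projection inherits the same structure: $\operatorname{Proj}_H Q_n = \sum_{i=1}^n [-\tfrac{1}{2},\tfrac{1}{2}] \operatorname{Proj}_H e_i$. The zonotope volume formula states that, if $v_1,\ldots,v_n$ lie in a $k$-dimensional Euclidean space, then $\operatorname{vol}_k\!\left(\sum_{i=1}^n [-\tfrac{1}{2},\tfrac{1}{2}] v_i\right) = \sum_{|I|=k}\left|\det(v_{i_1},\ldots,v_{i_k})\right|$, where the sum runs over $k$-subsets $I=\{i_1<\cdots<i_k\}$ of $\{1,\ldots,n\}$.

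Next I would pass to matrix form. Let $A$ be the $n\times k$ matrix whose columns form an orthonormal basis of $H$, and $B$ the $n\times(n-k)$ matrix whose columns form an orthonormal basis of $H^{\perp}$. Expressed in the basis $A$, the vector $\operatorname{Proj}_H e_i$ has coordinates $A^{T} e_i$, namely the $i$-th row of $A$. The zonotope formula therefore yields
\[
\operatorname{vol}_k(\operatorname{Proj}_H Q_n) = \sum_{|I|=k} \left|\det A_I\right|, \qquad \operatorname{vol}_{n-k}(\operatorname{Proj}_{H^{\perp}} Q_n) = \sum_{|J|=n-k} \left|\det B_J\right|,
\]
where $A_I$ and $B_J$ denote the square submatrices obtained by selecting the rows indexed by $I$ and $J$ respectively.

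The crucial ingredient is the complementary-minor identity for the orthogonal matrix $M=[A\mid B]$: for every $I\subseteq\{1,\ldots,n\}$ with $|I|=k$,
\[
\left|\det A_I\right| = \left|\det B_{I^c}\right|.
\]
I would prove this in a few lines as follows. From $MM^{T}=I_n$ one obtains $AA^{T}+BB^{T}=I_n$; restricting to the rows and columns in $I$ gives $A_I A_I^{T} + B_I B_I^{T} = I_k$, and hence $|\det A_I|^2 = \det(I_k - B_I B_I^{T})$. By Sylvester's determinant identity, this equals $\det(I_{n-k} - B_I^{T} B_I)$. Finally, from $M^{T}M = I_n$ one reads off $B^{T}B = I_{n-k}$, which splits as $B_I^{T} B_I + B_{I^c}^{T} B_{I^c} = I_{n-k}$; therefore $\det(I_{n-k} - B_I^{T} B_I) = \det(B_{I^c}^{T} B_{I^c}) = |\det B_{I^c}|^2$. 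Summing over all $k$-subsets $I$ and using the bijection $I\mapsto I^c$ between $k$- and $(n-k)$-subsets then gives the theorem.

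The only delicate step is the complementary-minor identity, which I expect to be the main obstacle; everything else is routine. Once this identity is granted (and its derivation above is essentially three uses of orthogonality plus Sylvester), the proof is immediate.
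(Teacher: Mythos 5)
Your proof is correct. Note, however, that the paper does not prove Theorem \ref{symmetry cube} at all: it is quoted with attributions to McMullen and to Chakerian--Filliman, with only the remark that both original proofs rest on Shephard's formula for projections of zonotopes. Your argument is in effect a self-contained reconstruction of that same route: writing $Q_n=\sum_i\left[-\tfrac12,\tfrac12\right]e_i$ and applying the zonotope volume formula to $\operatorname{Proj}_H Q_n$ is exactly Shephard's formula specialized to the cube, giving $\operatorname{vol}_k(\operatorname{Proj}_H Q_n)=\sum_{|I|=k}|\det A_I|$, and the heart of the matter is then the complementary-minor identity $|\det A_I|=|\det B_{I^c}|$ for the orthogonal matrix $M=[A\mid B]$. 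Your three-line derivation of that identity is sound: $A_IA_I^{T}+B_IB_I^{T}=I_k$ from $MM^{T}=I_n$, the Sylvester identity $\det(I_k-B_IB_I^{T})=\det(I_{n-k}-B_I^{T}B_I)$, and the split $B_I^{T}B_I+B_{I^c}^{T}B_{I^c}=I_{n-k}$ from $M^{T}M=I_n$ together give $|\det A_I|^2=|\det B_{I^c}|^2$, and summing over $I$ with the bijection $I\mapsto I^c$ finishes the proof. (The identity is classically Jacobi's complementary-minor theorem applied to an orthogonal matrix, but your elementary derivation avoids invoking it.) The only cosmetic caveats are the degenerate endpoints $k=n$ (where $B$ is empty and one uses the convention $\operatorname{vol}_0(\{pt\})=1$, $\det$ of the empty matrix $=1$) and the remark that the determinants in the zonotope formula are taken in an orthonormal basis of $H$, which your use of the matrix $A$ already handles; neither affects correctness.
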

This result was obtained by McMullen and, independently, by Chakerian and Filliman, using the same approach based on Shephard's formula. In particular, sharp bounds on the volumes of $k$-dimensional projections are equivalent to sharp bounds on the volumes of $(n-k)$-dimensional projections.

Let $a\in \mathbb{S}^{n-1}$, then one easily observes that Theorem~\ref{symmetry cube} suggests
\begin{equation}\label{cauchy cube}
\operatorname{vol}_{n-1}(\operatorname{Proj}_{a^{\perp}}Q_n)=\sum_{j=1}^n|a_j|.
\end{equation}
Therefore,
\[
1\ls  \operatorname{vol}_{n-1}(\operatorname{Proj}_{a^{\perp}}Q_n) \ls \sqrt{n}
\]
The lower bound is attained if and only if $Q_n$ is projected onto a coordinate hyperplane, and the upper bound is attained if and only if $Q_n$ is projected onto a hyperplane orthogonal to a main diagonal.

Moreover, it was proven by Chakerian-Filliman that for every $k$ dimensional subspace $H$ $$\operatorname{vol}_k(\operatorname{Proj}_HQ_n)\gr1,$$ and is sharp for all $k$ attained at coordinate subspaces.

 The upper bound for the planar problem was originally resolved in \cite{chakerian1986measures}, where Chakerian and Filliman, using the isoperimetric inequality for polygons, obtained a sharp upper bound for two-dimensional projections and consequently for $(n-2)$-dimensional ones as well.
 \begin{theorem}[Chakerian-Filliman]\label{planar proj cube Chak-Fill}
 Let $n\gr2$. For every $2$-dimensional subspace $H$ in $\mathbb{R}^n$ we have 
 \[
  \operatorname{vol}_2(\operatorname{Proj}_H Q_n)\ls \cot\left(\frac{\pi}{2n}\right).
 \]
 with equality being attained when $\operatorname{Proj}_HQ_n$ is a regular $2n$-gon of edge length $\sqrt{\frac{2}{n}}$.
 \end{theorem}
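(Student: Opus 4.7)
My approach is to recognize $\operatorname{Proj}_H Q_n$ as a centrally symmetric zonogon, bound its perimeter via Cauchy--Schwarz, and then invoke the classical isoperimetric inequality for planar polygons. First, I would fix an orthonormal basis $\{a,b\}$ of $H$ and set $v_j=(\langle e_j,a\rangle,\langle e_j,b\rangle)\in\mathbb{R}^2$ for $j=1,\dots,n$. Writing $Q_n=\sum_{j=1}^n\bigl[-\tfrac12 e_j,\tfrac12 e_j\bigr]$ as a Minkowski sum, the projection is
\[
\operatorname{Proj}_H Q_n=\sum_{j=1}^n\bigl[-\tfrac12 v_j,\tfrac12 v_j\bigr],
\]
a centrally symmetric convex polygon with at most $2n$ sides. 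By central symmetry, each segment $[-v_j/2,v_j/2]$ contributes two parallel boundary edges of length $|v_j|$, so the perimeter is $P=2\sum_{j=1}^n |v_j|$.

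Next, I would exploit the identity
\[
\sum_{j=1}^n|v_j|^2=\sum_{j=1}^n\bigl(\langle e_j,a\rangle^2+\langle e_j,b\rangle^2\bigr)=|a|^2+|b|^2=2.
\]
Cauchy--Schwarz then yields $P\leq 2\sqrt{n\sum_j|v_j|^2}=2\sqrt{2n}$, with equality iff $|v_j|=\sqrt{2/n}$ for every $j$.

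To conclude, I would invoke the isoperimetric inequality for convex polygons: a convex polygon with at most $m$ sides and perimeter $P$ has area at most $\frac{P^2}{4m}\cot(\pi/m)$, attained only by the regular $m$-gon. Taking $m=2n$ and substituting $P\leq 2\sqrt{2n}$ yields $\operatorname{vol}_2(\operatorname{Proj}_H Q_n)\leq \cot(\pi/(2n))$. Equality forces both Cauchy--Schwarz equality ($|v_j|=\sqrt{2/n}$) and the zonogon to be a regular $2n$-gon, whose side length then equals $P/(2n)=\sqrt{2/n}$, as claimed. The main obstacle is the mild subtlety that the zonogon may have fewer than $2n$ sides when some $v_j$ are parallel; this is harmless because, for fixed perimeter, the area of the regular $m$-gon is strictly increasing in $m$ (tending to $P^2/(4\pi)$), so the regular $2n$-gon still provides the correct upper bound and in fact rules out such degenerate configurations from achieving equality.
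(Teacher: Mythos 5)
Your proof is correct, but it takes a genuinely different route from the one in the paper. You exploit the zonogon structure directly: writing $Q_n=\sum_j[-\tfrac12 e_j,\tfrac12 e_j]$, the projection is the zonogon $\sum_j[-\tfrac12 v_j,\tfrac12 v_j]$ with perimeter $2\sum_j|v_j|$, the orthonormality of the spanning pair gives $\sum_j|v_j|^2=2$, Cauchy--Schwarz bounds the perimeter by $2\sqrt{2n}$, and the isoperimetric inequality for convex polygons with at most $2n$ sides finishes the argument; your remark that the bound $\frac{P^2}{4m}\cot(\pi/m)$ is monotone in $m$ correctly disposes of degenerate zonogons with fewer than $2n$ sides, and the equality analysis (all $|v_j|=\sqrt{2/n}$ plus regularity) matches the stated equality case. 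This is in essence the original Chakerian--Filliman argument, which the paper cites but does not reproduce. The paper instead proves the theorem via duality: using $\left(B_1^n\cap H\right)^{\circ}=2\,\operatorname{Proj}_H Q_n$ inside the plane $H$, it combines the sharp Mahler-volume bound for centrally symmetric $2k$-gons (Theorem~\ref{mahler 2k-gon}) with Nazarov's sharp lower bound on $\operatorname{vol}_2(B_1^n\cap H)$ (Theorem~\ref{planar section cross}). Your approach is more elementary and self-contained, requiring only the classical polygon isoperimetric inequality, and it covers $n=2$ directly (the quoted section bound is stated for $n\geq 3$); the paper's approach buys a conceptual link between planar projections of the cube, planar sections of the cross-polytope, and the Mahler volume, at the cost of invoking two nontrivial external results.
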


Our approach proceeds via the derivation of a closed-form volume formula, accompanied by sharp estimates.

Let $\Pi$ be a plane in $\mathbb{R}^n$. This plane is uniquely determined by two orthonormal vectors $u, v \in \mathbb{S}^{n-1}$, so that $\Pi = \operatorname{span}\{u, v\}$. In this section, we focus on the following problem:
\[
\max_{\Pi} / \min_{\Pi} \, \operatorname{vol}_2(\operatorname{Proj}_{\Pi} Q_n),
\]
which also addresses the $(n-2)$-dimensional case. This problem reduces to determining the vectors $u$ and $v$ satisfying
\[
\sum_{j=1}^n u_j^2 = \sum_{j=1}^n v_j^2 = 1 \quad \text{and} \quad \sum_{j=1}^n u_j v_j = 0
\]
that achieve extremal volumes. As before, we provide a closed-form formula that also yields these extremal values.

\begin{proposition}\label{formula for planar cube}
Let  $u,v\in \mathbb{R}^{n}$ orthogonal, then  
\[
\operatorname{vol}_{n-2}(\operatorname{Proj}_{\text{span}^{\perp}\{u,v\} }Q_n)=\sum_{1\ls i<j\ls n}|u_iv_j-u_j v_i|
\]
\end{proposition}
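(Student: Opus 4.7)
The plan is to first invoke Theorem~\ref{symmetry cube}, which gives
\[
\operatorname{vol}_{n-2}(\operatorname{Proj}_{\text{span}^\perp\{u,v\}} Q_n) \;=\; \operatorname{vol}_2(\operatorname{Proj}_{\text{span}\{u,v\}} Q_n),
\]
reducing the problem to computing the area of the planar projection. Writing $Q_n=\sum_{i=1}^n[-e_i/2,e_i/2]$ as a Minkowski sum of segments and using that projection commutes with Minkowski addition, the planar projection becomes, in the orthonormal basis $\{u,v\}$ of $\Pi:=\text{span}\{u,v\}$,
\[
\operatorname{Proj}_\Pi Q_n \;=\; \sum_{i=1}^n \left[-\tfrac{1}{2}(u_i,v_i),\, \tfrac{1}{2}(u_i,v_i)\right],
\]
i.e., a zonogon in $\mathbb{R}^2$ whose generators are $w_i:=(u_i,v_i)$.

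The remaining step is the classical zonogon area formula
\[
\operatorname{vol}_2\!\left(\sum_{i=1}^n \left[-\tfrac{w_i}{2},\, \tfrac{w_i}{2}\right]\right) \;=\; \sum_{1\ls i<j\ls n} |\det(w_i, w_j)|,
\]
which I would prove by induction on $n$. The base case $n=2$ is the parallelogram identity. For the inductive step the key tool is the planar mixed-area identity
\[
\operatorname{vol}_2(K+[-w/2,w/2]) \;=\; \operatorname{vol}_2(K) + |w|\cdot \operatorname{width}_{\hat{w}^\perp}(K),
\]
combined with the additivity of support functions under Minkowski sums: the width of the zonogon $Z_{n-1}:=\sum_{i<n}[-w_i/2,w_i/2]$ in direction $\hat{w}_n^\perp$ equals $\sum_{i<n}|\langle w_i,\hat{w}_n^\perp\rangle|=\sum_{i<n}|\det(w_i,w_n)|/|w_n|$. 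Multiplying by $|w_n|$ gives exactly $\sum_{i<n}|\det(w_i,w_n)|$, and the inductive hypothesis for $Z_{n-1}$ closes the argument. Substituting $\det(w_i,w_j)=u_iv_j-u_jv_i$ then yields the stated formula.

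The essentially only nontrivial ingredient is the zonogon area identity (equivalently, the dimension-two special case of Shephard's zonotope-volume formula); everything else is clean bookkeeping. An alternative, symmetry-free route is to compute the $(n-2)$-dimensional volume directly: identify $\operatorname{Proj}_{\text{span}^\perp\{u,v\}}Q_n$ as the zonotope generated by the vectors $p_i:=\operatorname{Proj}_{\text{span}^\perp\{u,v\}}e_i$, apply Shephard's formula, and match each $(n-2)\times(n-2)$ minor $|\det(p_{i_1},\ldots,p_{i_{n-2}})|$ with $|u_kv_l-u_lv_k|$ (where $\{k,l\}$ is the complementary index pair) via a block-triangular determinant expansion using $\{u,v\}$ as the completing orthonormal basis of $\mathbb{R}^n$.
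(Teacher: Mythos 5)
Your proof is correct, but it follows a genuinely different route from the paper. The paper never passes through the planar projection: it computes $\operatorname{vol}_{n-2}(\operatorname{Proj}_{\text{span}^{\perp}\{u,v\}}Q_n)$ directly from the mixed-volume representation of projections, formula~\eqref{|U||P_EK|}, writing $[0,1]^n=\sum_i[0,e_i]$, expanding $V\bigl([0,1]^n[n-2],[0,u],[0,v]\bigr)$ by multilinearity, and evaluating each term as a determinant $\tfrac{1}{n!}\lvert\det(e_{i_1},\ldots,e_{i_{n-2}},u,v)\rvert$. You instead first invoke the McMullen--Chakerian--Filliman duality (Theorem~\ref{symmetry cube}) to reduce to the two-dimensional projection, identify it as a zonogon with generators $w_i=(u_i,v_i)$, and prove the zonogon area formula by an elementary induction using $\operatorname{vol}_2(K+[-w/2,w/2])=\operatorname{vol}_2(K)+|w|\,\mathrm{width}_{\hat w^{\perp}}(K)$ together with additivity of support functions; all these steps are sound (and your handling of a possible $w_n=0$ is a trivial degenerate case). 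What each approach buys: yours keeps the core computation entirely planar and elementary, at the cost of leaning on the nontrivial duality theorem for the cube; the paper's argument avoids that theorem, is shorter given the mixed-volume background already set up in Section~\ref{Notation and Background}, and extends verbatim to projections onto complements of $r$-dimensional subspaces. Your sketched ``symmetry-free'' alternative via Shephard's formula in dimension $n-2$ is essentially the paper's proof in disguise, since Shephard's zonotope formula is exactly the multilinear expansion of the mixed volume used there. One small point applying to both your argument and the statement: the identity requires $u,v$ to be orthonormal, not merely orthogonal (the right-hand side is not invariant under rescaling $u,v$ while the left-hand side is); you implicitly use this when you take $\{u,v\}$ as an orthonormal basis of the plane, so it is worth stating the hypothesis explicitly.
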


We conclude by establishing sharp bounds for the latter quantity.
\begin{proposition}\label{bound planar cube}
Let $u,v\in \mathbb{S}^{n-1}$. Then,
\[
 \sum_{1\ls i<j\ls n} |u_iv_j-u_jv_i|\ls \cot\left( \frac{\pi}{2n}\right),
\]
and equality is attained when $(u_i,v_i)=\sqrt{\frac{2}{n}} \left(\cos\left(\frac{ (\sigma(i)-1)\pi}{n} \right),\sin \left(\frac{(\sigma(i)-1)\pi}{n} \right) \right)$, for any permutation $\sigma$ of $\{ 1,\ldots,n\}$.

Moreover, if also  $\sum_{j=1}^n u_jv_j=0$, then 
\[
 \sum_{1\ls i<j\ls n} |u_iv_j-u_jv_i| \gr1,
\]
and equality is attained when $u,v$ are distinct vectors from $\{\pm e_1,\ldots, \pm e_n \}$.
\end{proposition}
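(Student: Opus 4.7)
I would handle the two bounds by distinct methods—an isoperimetric argument for the upper bound, and a short Lagrange-identity computation for the lower bound.

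For the \emph{upper bound}, the key is to recognize the sum $\sum_{i<j}|u_iv_j - u_jv_i|$ as the area of a planar zonotope. Setting $w_i := (u_i, v_i) \in \mathbb{R}^2$, one has $\sum_i |w_i|^2 = |u|^2+|v|^2 = 2$, and the classical zonotope area formula gives
\[
\sum_{1 \le i<j \le n}|\det(w_i, w_j)| = \operatorname{vol}_2\!\Bigl(\sum_{i=1}^n \bigl[-\tfrac12 w_i,\, \tfrac12 w_i\bigr]\Bigr).
\]
The right-hand side is a centrally symmetric convex polygon whose (at most) $2n$ edges are translates of $\pm w_1,\dots,\pm w_n$, so its perimeter equals $P = 2\sum_i|w_i|$. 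By Cauchy--Schwarz, $P \le 2\sqrt{n\sum_i|w_i|^2} = 2\sqrt{2n}$. The isoperimetric inequality for $m$-gons, $\operatorname{area} \le \tfrac{P^2}{4m}\cot(\pi/m)$, applied with $m = 2n$, then yields $\operatorname{area} \le \tfrac{P^2}{8n}\cot(\pi/(2n)) \le \cot(\pi/(2n))$. Equality forces the polygon to be a \emph{regular} $2n$-gon and all $|w_i|$ to coincide (so $|w_i| = \sqrt{2/n}$); both conditions together pin the $w_i$ to equally spaced points of norm $\sqrt{2/n}$ on the unit circle, giving precisely the equality configuration stated.

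For the \emph{lower bound}, the additional assumption $u \cdot v = 0$ together with the Binet--Cauchy (Lagrange) identity yields
\[
\sum_{1 \le i<j \le n}(u_iv_j - u_jv_i)^2 \;=\; |u|^2|v|^2 - (u\cdot v)^2 \;=\; 1.
\]
Moreover, for each pair $i<j$, a double application of Cauchy--Schwarz gives $(u_iv_j - u_jv_i)^2 \le (u_i^2+u_j^2)(v_i^2+v_j^2) \le 1$, so $|u_iv_j - u_jv_i| \ge (u_iv_j - u_jv_i)^2$. Summing,
\[
\sum_{i<j}|u_iv_j - u_jv_i| \;\ge\; \sum_{i<j}(u_iv_j - u_jv_i)^2 \;=\; 1.
\]
Equality forces each minor $|u_iv_j - u_jv_i|\in\{0,1\}$, which is realized by the choice $u = \pm e_i$, $v = \pm e_j$ for any $i \ne j$, since only the $(i,j)$-minor is then nonzero and equals $\pm 1$.

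The principal obstacle is the upper bound—specifically, handling degenerate configurations where some $w_i$ vanish or several are parallel, so the zonotope has fewer than $2n$ distinct edges. This can be resolved by regarding such configurations as degenerate $2n$-gons with zero-length or merged edges, for which the polygon isoperimetric bound still applies (in fact strictly, since only the regular $2n$-gon saturates). The lower bound, by contrast, is essentially a one-line consequence of the Lagrange identity combined with the entrywise estimate $|u_iv_j - u_jv_i| \le 1$.
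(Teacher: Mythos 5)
Your proof is correct, but for the upper bound you take a genuinely different route from the paper. You identify $\sum_{i<j}|u_iv_j-u_jv_i|$ with the area of the planar zonogon generated by the segments $[-\tfrac12 w_i,\tfrac12 w_i]$, $w_i=(u_i,v_i)$, bound its perimeter by Cauchy--Schwarz, and invoke the isoperimetric inequality for $2n$-gons; this is essentially the classical Chakerian--Filliman argument that the paper cites as the original proof of Theorem~\ref{planar proj cube Chak-Fill}, and your handling of degenerate configurations (vanishing or parallel $w_i$, via the monotonicity of $m\mapsto \tfrac{1}{m}\cot(\pi/m)$ implicit in treating them as degenerate $2n$-gons) is sound. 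The paper instead parametrizes $(u_i,v_i)=r_i(\cos\theta_i,\sin\theta_i)$, rewrites the sum as $\sum_{i<j} r_ir_j|\sin(\theta_j-\theta_i)|$, and applies the discrete inequality of Lemma~\ref{matrix lemma} (with $\omega_i=e^{2i\theta_i}$) followed by Cauchy--Schwarz, thereby avoiding any appeal to the polygonal isoperimetric inequality; your approach buys geometric transparency and a self-contained equality analysis, while the paper's buys independence from the isoperimetric machinery, which is precisely the point of its alternative proof. For the lower bound both arguments rest on the Lagrange identity $\sum_{i<j}(u_iv_j-u_jv_i)^2=1$; you conclude via $|m_{ij}|\le 1\Rightarrow m_{ij}^2\le |m_{ij}|$, the paper via $\|\cdot\|_1\ge\|\cdot\|_2$ — essentially the same step. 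One cosmetic slip: in your equality discussion the $w_i$ have norm $\sqrt{2/n}$, so they lie on the circle of that radius, not on the unit circle; this does not affect the argument.
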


\subsection{A second proof through planar section of $B_1^n$}
We now present an alternative proof for the planar projections of the cube by employing planar sections of the cross-polytope $B_1^n$ and highlighting their connection to the Mahler volume of a convex body $K$, defined by $\mathcal{P}(K)=\operatorname{vol}(K)\operatorname{vol}(K^{\circ})$, where $K^{\circ}$ denotes the polar body of $K$. Notice that $\left( B_1^n \cap H \right)^{\circ} = 2\,\operatorname{Proj}_{H} Q_n .$
To proceed, we will require the following tools:

\begin{theorem}\label{mahler 2k-gon}\cite[Theorem 2]{meyer2011volume}
Let $P$ be a centrally symmetric polygon with $2k$ vertices. Then, $$\mathcal{P}(P)\ls \mathcal{P}(C_{2k})=4k^2\sin^2\left(\frac{\pi}{2k} \right).$$
 Equality is attained if and only if $P$ is an affine image of the regular $2k$-gon $C_{2k}$.
\end{theorem}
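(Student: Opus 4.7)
The Mahler volume $\mathcal{P}(P)=\operatorname{vol}(P)\operatorname{vol}(P^{\circ})$ is invariant under $GL(2,\mathbb{R})$: indeed $(TP)^{\circ}=T^{-t}P^{\circ}$, so the Jacobian factors $|\det T|$ and $|\det T|^{-1}$ cancel. My plan is to exploit this invariance to reduce Theorem~\ref{mahler 2k-gon} to a compact optimization problem and to identify the regular $2k$-gon $C_{2k}$ as the unique maximizer, up to affine equivalence.

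First I would parametrize a centrally symmetric $2k$-gon $P$ by its vertices $v_1,\dots,v_k,-v_1,\dots,-v_k$ listed in cyclic order, with $v_i=r_i(\cos\theta_i,\sin\theta_i)$ and $\theta_1<\theta_2<\dots<\theta_k<\theta_1+\pi$. Fan-triangulating $P$ from the origin gives the closed form
\[
\operatorname{vol}(P)=\sum_{i=1}^{k}r_i r_{i+1}\sin(\theta_{i+1}-\theta_i), \qquad r_{k+1}:=r_1,\ \theta_{k+1}:=\theta_1+\pi.
\]
The polar $P^{\circ}$ is again a centrally symmetric $2k$-gon: its $2k$ vertices are $\pm n_i/h_i$, where $n_i$ is the outer unit normal of the edge $[v_i,v_{i+1}]$ and $h_i=\langle v_i,n_i\rangle$ is the corresponding support value. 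An analogous expression for $\operatorname{vol}(P^{\circ})$ makes $\mathcal{P}$ an explicit rational–trigonometric function of $(r_i,\theta_i)$. After normalizing, say, $v_1=e_1$ and $\langle v_2,e_1\rangle=0$ (which uses part of the affine freedom), the remaining parameters lie in a compact set, whose boundary strata correspond to degenerate polygons with strictly fewer than $2k$ vertices.

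The crux of the argument is the variational analysis at an interior maximum. The stationarity equations $\partial_{r_j}\log\mathcal{P}=0$ and $\partial_{\theta_j}\log\mathcal{P}=0$ couple each vertex to its two neighbors. The \textbf{main obstacle} is to deduce from these local conditions, together with central symmetry, that $P$ must be affinely equivalent to the regular $2k$-gon. A natural route is to exhibit an affine-symmetrization procedure on centrally symmetric $2k$-gons that strictly increases $\mathcal{P}$ unless $P$ is already affinely regular; compactness and monotonicity then pin down the maximizer. The alternative, along the lines of~\cite{meyer2011volume}, is to build a continuous deformation preserving $\operatorname{vol}(P)$ while monotonically increasing $\operatorname{vol}(P^{\circ})$, whose fixed locus is exactly the affinely regular polygons. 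The boundary strata are handled by a simple induction on $k$, since the target quantity $4k^2\sin^2(\pi/(2k))$ is strictly increasing in $k$.

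Once $P=C_{2k}$ has been identified as the maximizer, the evaluation is direct: the regular $2k$-gon inscribed in the unit disk has area $k\sin(\pi/k)$, while its polar is the regular $2k$-gon with circumradius $1/\cos(\pi/(2k))$, of area $k\sin(\pi/k)/\cos^2(\pi/(2k))$. Applying the double-angle identity $\sin(\pi/k)=2\sin(\pi/(2k))\cos(\pi/(2k))$ yields $\mathcal{P}(C_{2k})=4k^2\sin^2(\pi/(2k))$, establishing both the bound and the characterization of the equality case.
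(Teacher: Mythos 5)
First, a structural point: the paper does not prove Theorem~\ref{mahler 2k-gon} at all --- it is imported verbatim from \cite[Theorem 2]{meyer2011volume} and used as a black box in the alternative proof of Theorem~\ref{planar proj cube Chak-Fill}. So there is no in-paper argument to compare against, and your proposal has to stand on its own. As written, it does not: the correct parts (affine invariance of $\mathcal{P}$, the vertex parametrization, the fan formula for $\operatorname{vol}(P)$, the description of the vertices of $P^{\circ}$ as $\pm n_i/h_i$, and the computation $\mathcal{P}(C_{2k})=4k^2\sin^2(\pi/(2k))$) are all routine, while the entire content of the theorem --- that among centrally symmetric $2k$-gons the volume product is maximized exactly by affinely regular ones --- is precisely the step you flag as ``the main obstacle'' and then only gesture at. Postulating ``an affine-symmetrization procedure that strictly increases $\mathcal{P}$ unless $P$ is already affinely regular,'' or ``a continuous deformation preserving $\operatorname{vol}(P)$ while monotonically increasing $\operatorname{vol}(P^{\circ})$ whose fixed locus is the affinely regular polygons,'' is not a reduction: the existence of such a monotone procedure is logically equivalent to the theorem, so the argument is circular at its core. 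Likewise, the Lagrange-condition route would require showing that every interior critical point of this rational--trigonometric function is affinely regular, which you do not attempt and which is not at all apparent from the stationarity equations.

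For the record, the actual mechanism in \cite{meyer2011volume} is the shadow-system (linear parameter system) technique: moving one vertex pair $\pm v_i$ along a suitable line produces a family $P_t$ for which $\operatorname{vol}(P_t)$ is affine in $t$ and $\operatorname{vol}(P_t^{\circ})^{-1}$ is convex in $t$, so the volume product on a segment attains its maximum at an endpoint, where the polygon either loses a vertex (allowing induction on $k$, using that $4k^2\sin^2(\pi/(2k))$ increases in $k$) or acquires extra structure; the equality case requires tracking strictness along these deformations. None of this machinery appears in your sketch, and without it the proof is an outline, not a proof. Two secondary gaps: the normalization $v_1=e_1$, $\langle v_2,e_1\rangle=0$ does not make the parameter set compact (the remaining $r_i$ can blow up and consecutive angles can collapse), so the existence of a maximizer needs an a priori bound or a position argument (e.g.\ John position) plus continuity of $\mathcal{P}$ on the degenerate boundary strata; and ``equality iff affinely regular'' needs a separate strictness argument that the compactness-plus-induction scheme alone does not supply.
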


\begin{theorem}[Nazarov, in \cite{chasapis2022slicing}]\label{planar section cross}
Let  $n\gr3$. For any $2$-dimensional subspace $H$ of $\mathbb{R}^n$, we have 
\[
\operatorname{vol}_2(B_1^n\cap H)\gr\frac{n^2 \sin^3(\frac{\pi}{2n})}{\cos(\frac{\pi}{2n})}.
\]
Equality is attained when $B_1^n \cap H$ is a regular $2n$-gon.
\end{theorem}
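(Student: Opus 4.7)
The plan is to pass to the polar body via $(B_1^n\cap H)^\circ = 2\,\operatorname{Proj}_H Q_n$ and reformulate the claim as a sharp integral inequality for a support function. Parameterize $H=\operatorname{span}\{u,v\}$ with $u,v\in\R^n$ orthonormal and identify $H\cong \R^2$. Writing $g_i(\theta):=u_i\cos\theta+v_i\sin\theta$, the orthonormality of $u,v$ is equivalent to saying that the vector $g(\theta)=(g_i(\theta))_{i=1}^n$ lies on $\mathbb{S}^{n-1}$ for every $\theta\in[0,2\pi)$. The function
\[
f(\theta) \;:=\; \sum_{i=1}^n |g_i(\theta)| \;=\; \|g(\theta)\|_1 \;=\; 2\,h_{\operatorname{Proj}_H Q_n}(\cos\theta,\sin\theta)
\]
is the support function of the polar $(B_1^n\cap H)^\circ$, and hence
\[
\operatorname{vol}_2(B_1^n\cap H) \;=\; \frac{1}{2}\int_0^{2\pi}\frac{d\theta}{f(\theta)^2}.
\]

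Next, write $(u_j,v_j)=r_j(\cos\phi_j,\sin\phi_j)$ so that $f(\theta)=\sum_j r_j|\cos(\theta-\phi_j)|$. Setting $w_j=u_j+iv_j\in\C$, the orthonormality constraints read $\sum_j|w_j|^2=2$ and $\sum_j w_j^{\,2}=0$, equivalently $\sum_j r_j^{\,2}=2$ together with the hard second-moment condition $\sum_j r_j^{\,2}e^{2i\phi_j}=0$. Expanding $f$ via $|\cos\beta|=\tfrac{2}{\pi}+\tfrac{4}{\pi}\sum_{k=1}^{\infty}\tfrac{(-1)^{k-1}}{4k^2-1}\cos(2k\beta)$, the mode-$2k$ Fourier coefficient of $f$ becomes a universal multiple of the complex moment $A_k:=\sum_j r_j e^{2ik\phi_j}$. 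Combining Cauchy--Schwarz $\int f^{-2}\cdot\int f^2 \gr (2\pi)^2$ with Parseval's identity reduces the desired lower bound on $\operatorname{vol}_2(B_1^n\cap H)$ to an upper bound on
\[
\int_0^{2\pi}f(\theta)^2\,d\theta \;=\; \frac{8}{\pi}\Bigl(\textstyle\sum_j r_j\Bigr)^2 \;+\; \frac{16}{\pi}\sum_{k=1}^{\infty}\frac{|A_k|^2}{(4k^2-1)^2}
\]
over admissible configurations $(r_j,\phi_j)$.

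The main obstacle is recovering the sharp constant rather than just the correct asymptotic order. A crude estimate $\sum_j r_j\ls\sqrt{n\sum_j r_j^{\,2}}=\sqrt{2n}$ together with discarding the higher-mode terms yields only $\operatorname{vol}_2(B_1^n\cap H)\gr \pi^3/(8n)$, matching the target $\tfrac{n^2\sin^3(\pi/(2n))}{\cos(\pi/(2n))}$ as $n\to\infty$ but falling short for any finite $n$. To extract the sharp constant one must analyze the $A_k$ jointly under the hard constraint $\sum_j r_j^{\,2}e^{2i\phi_j}=0$, identifying the extremal admissible configurations — up to permutations of the indices and rotations in $H$ — as the regular arrangements $(u_i,v_i)=\sqrt{2/n}\bigl(\cos\tfrac{(i-1)\pi}{n},\sin\tfrac{(i-1)\pi}{n}\bigr)$, at which $A_k$ vanishes unless $n\mid k$ and $B_1^n\cap H$ is a regular $2n$-gon. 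There the bound is verified by a direct trigonometric computation of the area of the regular $2n$-gon, or equivalently by combining the equality cases of Theorem~\ref{mahler 2k-gon} and Theorem~\ref{planar proj cube Chak-Fill}, thereby simultaneously identifying the minimum and the extremal $H$.
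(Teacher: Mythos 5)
This statement is not proved in the paper: it is quoted as Nazarov's theorem from \cite{chasapis2022slicing} and used as a black box in the second proof of Theorem~\ref{planar proj cube Chak-Fill}, so your attempt has to stand on its own. Your setup is correct and is indeed the natural one: $(B_1^n\cap H)^\circ=2\operatorname{Proj}_HQ_n$, the representation $\operatorname{vol}_2(B_1^n\cap H)=\tfrac12\int_0^{2\pi}f(\theta)^{-2}\,d\theta$ with $f(\theta)=\sum_j r_j|\cos(\theta-\phi_j)|$, the constraints $\sum_j r_j^2=2$ and $\sum_j r_j^2e^{2i\phi_j}=0$, and the Fourier expansion of $|\cos|$ are all right. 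But the argument stops exactly where the theorem begins. The Cauchy--Schwarz step $\int f^{-2}\ge(2\pi)^2/\int f^2$ is structurally incapable of giving the sharp constant: equality in it forces $f$ to be constant, whereas at the extremal regular configuration $f$ is the (non-constant) support function of a regular $2n$-gon, so no upper bound on $\int f^2$, however precise, can close the gap. Moreover, even your ``crude'' estimate is not valid as written: Cauchy--Schwarz requires an \emph{upper} bound on $\int f^2$, and the higher-mode terms $\tfrac{16}{\pi}\sum_k|A_k|^2/(4k^2-1)^2$ are nonnegative, so they cannot simply be discarded; a correct crude bound (e.g.\ $f(\theta)\le\sqrt{n}$ pointwise) yields only $\operatorname{vol}_2(B_1^n\cap H)\ge\pi/n$, which lies strictly below the claimed $n^2\sin^3(\pi/2n)/\cos(\pi/2n)$.

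The final paragraph is a statement of intent rather than a proof: saying that ``one must analyze the $A_k$ jointly'' and then evaluating the functional at the regular arrangement only confirms that the bound is attained there; it does not show that no other admissible $(r_j,\phi_j)$ does better, and establishing exactly that minimality is the whole content of Nazarov's theorem (the argument in \cite{chasapis2022slicing} is a delicate optimization, not a soft Fourier estimate). The proposed shortcut ``combining the equality cases of Theorem~\ref{mahler 2k-gon} and Theorem~\ref{planar proj cube Chak-Fill}'' also fails on two counts: within this paper it is circular, since Theorem~\ref{planar proj cube Chak-Fill} is rederived \emph{from} the present theorem; and quantitatively, the correct combination would need a \emph{lower} bound on the Mahler product together with the projection upper bound, and the sharp planar lower Mahler bound $\mathcal{P}(P)\ge 8$ (attained by parallelograms) gives only $\operatorname{vol}_2(B_1^n\cap H)\ge 8/\bigl(4\cot(\pi/2n)\bigr)=2\tan(\pi/2n)$, which is strictly weaker than the stated bound for all $n\ge3$ because $n\sin(\pi/2n)>\sqrt{2}$. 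So the key idea --- a proof that the regular $2n$-gon configuration is the global minimizer under the hard constraint $\sum_j r_j^2e^{2i\phi_j}=0$ --- is missing, and the proposal does not constitute a proof of the theorem.
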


\begin{proof}[Proof of Theorem \ref{planar proj cube Chak-Fill}]
We begin by noticing that $B_1^n\cap H$ is a convex symmetric $2k$-gon, $k\ls n$. Thus, from Theorem~\ref{mahler 2k-gon} we obtain 
\[
\operatorname{vol}_2(B_1^n\cap H)\operatorname{vol}_{2}(\operatorname{Proj}_{H}Q_n)\ls k^2\sin^2 \left(\frac{\pi}{2k} \right)\ls n^2\sin^2 \left(\frac{\pi}{2n}\right),
\]
since $x\mapsto\frac{\sin(x)}{x}$ is positive and decreasing  on $[0,\pi)$.
Thus, using also Theorem~\ref{planar section cross}
\[
\operatorname{vol}_{2}\left(\operatorname{Proj}_HQ_n \right)\ls \frac{n^2\sin^2 \left(\frac{\pi}{2n}\right)}{\operatorname{vol}_k(B_1^n\cap H)}\ls \cot\left(\frac{\pi}{2n}\right).
\]
Equality is obviously attained at the regular $2n$-gon.
\end{proof}

\subsection{Generalizations to $L_p$-projection bodies}
There also exists a well-established connection between the volume of hyperplane projections of a convex body $K \subset \mathbb{R}^n$ and its \emph{projection body} $\Pi K$. The latter is a symmetric convex body defined via its support function as
\[
h_{\Pi K}(a) := \frac{1}{2} \int_{\mathbb{S}^{n-1}} |\langle x, a \rangle| \, S_{n-1}(K, dx) 
= \operatorname{vol}_{n-1}(\operatorname{Proj}_{a^{\perp}} K).
\]
Finally, we consider generalizations of~\eqref{main thm},~\eqref{cauchy cube} and the respective cross-polytope formula (see \cite{nayar2023extremal})  in the context of the \emph{$L_p$-projection bodies} $\Pi_p K$, $p\gr1$  of a convex body $K$ in $\mathbb{R}^n$ with $0\in \text{int}(K)$ defined by 
\[
h_{\Pi_pK}^p(a)=\frac{1}{2}\int_{\mathbb{S}^{n-1}}|\langle x,a \rangle|^{p}dS_{p}(K,x).
\]
Notice that for $\Pi_1K=\Pi K$. 

Let also $\Delta_c^n:=\Delta_n-\text{bar}(\Delta_n)$ be the centered simplex. We prove the following formulas:
\begin{theorem}\label{Formulas for L_p proj}
Let $a \in \mathbb{S}^{n-1}$ then

\begin{equation}\label{L_p cube}
    h^{p}_{\Pi_pQ_n}(a)=\frac{1}{2^{1-p}}\sum_{j=1}^{n}|a_j|^p,
\end{equation}
\begin{equation}\label{L_p cross}
    h_{\Pi_p B_1^n}^p(a)=\frac{2^{n-1}}{(n-1)!} \E \left|\sum_{j=1}^na_j \varepsilon_j \right|^p,
\end{equation}
and for $\sum_{i=1}^{n+1}a_i=0$ and $\sum_{j=1}^{n+1}a_j^2=1$
\begin{equation}\label{L_p simp}
   h_{\Pi_p \Delta_c^n}^p(a)=\frac{(n+1)^{\frac{2p-1}{2}}}{2(n-1)!}\sum_{j=1}^{n+1}|a_j|^p.
\end{equation}
Notice that for $p=1$ we retrieve the already known formulas.
\end{theorem}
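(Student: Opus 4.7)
The plan is to exploit the polytopal formula for the $L_p$-surface area measure. Recall that for a polytope $K$ with facets $F_1,\ldots,F_N$ and outer unit normals $\nu_1,\ldots,\nu_N$, the classical surface area measure is a sum of point masses $dS_{n-1}(K,\cdot) = \sum_i \operatorname{vol}_{n-1}(F_i)\,\delta_{\nu_i}$, while by the Lutwak definition $dS_p(K,\cdot) = h_K(\cdot)^{1-p}\,dS_{n-1}(K,\cdot)$. Substituting into the integral defining $h^p_{\Pi_p K}$ reduces each of the three identities to a finite sum
\[
h^p_{\Pi_p K}(a) = \tfrac{1}{2}\sum_{i=1}^N \operatorname{vol}_{n-1}(F_i)\,h_K(\nu_i)^{1-p}\,|\langle \nu_i, a\rangle|^p,
\]
and the task becomes assembling the facet data for each of $Q_n$, $B_1^n$, and $\Delta_c^n$.

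For the cube $Q_n$, the $2n$ facets have outer normals $\pm e_j$, unit $(n-1)$-volume, and $h_{Q_n}(\pm e_j) = 1/2$; plugging in yields \eqref{L_p cube} immediately. For the cross-polytope $B_1^n$, the $2^n$ simplicial facets are indexed by signs $\varepsilon \in \{-1,+1\}^n$; each is a regular $(n-1)$-simplex of edge length $\sqrt{2}$, hence has $(n-1)$-volume $\sqrt{n}/(n-1)!$, with outer unit normal $\nu_\varepsilon = \varepsilon/\sqrt{n}$ and $h_{B_1^n}(\nu_\varepsilon) = 1/\sqrt{n}$. Since $|\langle \nu_\varepsilon, a\rangle|^p = n^{-p/2}|\sum_j \varepsilon_j a_j|^p$, the powers of $n$ cancel and the sum over $\varepsilon \in \{-1,+1\}^n$ produces $2^n\,\E|\sum_j a_j \varepsilon_j|^p$, yielding \eqref{L_p cross}.

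The simplex case is the main one. For $\Delta_c^n \subset \mathcal{H}'$, the $n+1$ facets are obtained by omitting one vertex. The outer unit normal to the facet opposite the $k$-th vertex must lie in $\mathcal{H}'$ and be constant on the coordinates $i\ne k$; solving these linear conditions and orienting the normal away from the apex gives
\[
\nu_k = \tfrac{1}{\sqrt{n(n+1)}}\bigl(1,\ldots,1,-n,1,\ldots,1\bigr),
\]
with $-n$ in position $k$. Each facet is a regular $(n-1)$-simplex of edge $\sqrt{2}$ with volume $\sqrt{n}/(n-1)!$, and a direct check yields $h_{\Delta_c^n}(\nu_k) = 1/\sqrt{n(n+1)}$. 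The crucial step is to use the centering constraint $\sum_i a_i = 0$, which collapses the inner product to
\[
\langle \nu_k, a\rangle = \tfrac{1}{\sqrt{n(n+1)}}\Bigl(\sum_{i\ne k} a_i - n a_k\Bigr) = -\sqrt{\tfrac{n+1}{n}}\, a_k.
\]
Substituting into the polytope formula, the factors $\sqrt{n}$, $(n(n+1))^{(p-1)/2}$, and $((n+1)/n)^{p/2}$ combine so that all powers of $n$ cancel, leaving the prefactor $(n+1)^{(2p-1)/2}/(2(n-1)!)$ of \eqref{L_p simp}.

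The main obstacle is the simplex: the outer normals must be identified \emph{within} the subspace $\mathcal{H}'$ rather than in the ambient $\mathbb{R}^{n+1}$, and the key algebraic reduction $\langle \nu_k, a\rangle \propto a_k$ is driven precisely by the linear condition $a \in \mathcal{H}'$. Once this is in place, the cube and cross-polytope cases are routine substitutions, and all three identities fall out of the single polytopal formula above.
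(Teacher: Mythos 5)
Your proposal is correct and follows essentially the same route as the paper: both reduce the defining integral to the finite polytopal sum $\tfrac12\sum_F \operatorname{vol}_{n-1}(F)\,h_K^{1-p}(\nu_F)\,|\langle \nu_F,a\rangle|^p$ via the discrete $L_p$ surface area measure, and then insert the facet volumes, unit normals, and support values of $Q_n$, $B_1^n$, and $\Delta_c^n$, with the simplex case collapsing through $\sum_i a_i=0$ exactly as in the paper's Theorem \ref{main thm}. Your write-up merely makes explicit the simplex computation that the paper dispatches by reference to that earlier proof.
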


\subsection{Sharp estimates}
Sharp estimates for \eqref{L_p cross} reduce to the classical Khintchine inequality, for which the optimal bounds were established by Haagerup \cite{haagerup1981best}.

Let,
\[
F_p(a) := \sum_{j=1}^{n} |a_j|^p.
\]

In the case where only the normalization condition  $\sum_{j=1}^na_j^2=1$ holds, the ordering of the extremal bounds reverses when passing between the regimes $1<p<2$ and $p>2$. Note that $F_2(a)=1$.
\begin{proposition}\label{bounds unc}
 Let $p\gr1$ and  $a_1,\ldots,a_{n}$ reals such that $\sum_{j=1}^{n}a_j^2=1$.  
\begin{itemize}
    \item For $1< p<2$, $F_p(a)$ is minimized at $(1,0,\ldots,0)$ and maximized at $\left( \frac{1}{\sqrt{n}},\ldots,\frac{1}{\sqrt{n}}\right).$

    \item For $p>2$,  the roles of the extrema are reversed: 
the configuration that maximizes $F_p(a)$ for $1 < p < 2$ now yields the minimum, 
and conversely, the previous minimum becomes the maximum.
    \end{itemize}
\end{proposition}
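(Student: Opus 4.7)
The plan is to reduce the problem to a one-variable convexity statement by the substitution $b_j = a_j^2 \in [0,1]$, under which the constraint becomes $\sum_{j=1}^n b_j = 1$, $b_j \geq 0$, and
\[
F_p(a) = \sum_{j=1}^n \phi(b_j), \qquad \phi(t) := t^{p/2}.
\]
The function $\phi$ is strictly convex on $[0,\infty)$ for $p > 2$ and strictly concave for $1 < p < 2$; this single sign change of $\phi''$ across $p=2$ is exactly what forces the roles of the extrema to swap, so both regimes can be handled by the same template applied to the appropriate inequality.

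In the regime $p > 2$, the convex function $a \mapsto \sum \phi(b_j)$ on the standard simplex $\{b : b_j \geq 0,\ \sum b_j = 1\}$ attains its maximum at an extreme point of the simplex by the classical max principle for convex functions; the extreme points are $e_1, \ldots, e_n$, where $\sum \phi(b_j) = \phi(1) = 1$. The minimum then comes directly from Jensen's inequality,
\[
\frac{1}{n}\sum_{j=1}^n \phi(b_j) \geq \phi\!\left(\tfrac{1}{n}\textstyle\sum_j b_j\right) = \phi(1/n),
\]
which gives $F_p(a) \geq n^{1-p/2}$ with equality iff $b_1 = \cdots = b_n = 1/n$, i.e.\ at the uniform vector $(1/\sqrt n, \ldots, 1/\sqrt n)$ up to signs.

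In the regime $1 < p < 2$, the two inequalities reverse because $\phi$ is now strictly concave. Jensen in the reverse direction yields the upper bound $F_p(a) \leq n^{1-p/2}$, attained at the uniform point, while the lower bound uses the standard fact that a concave function on $[0,\infty)$ with $\phi(0) = 0$ is subadditive; iterating this to $n$ summands gives
\[
\sum_{j=1}^n \phi(b_j) \geq \phi\!\left(\textstyle\sum_{j=1}^n b_j\right) = 1,
\]
with equality (by strict concavity for $p \neq 2$) only when exactly one $b_j$ equals $1$, i.e.\ at a signed standard basis vector $(\pm 1, 0, \ldots, 0)$ up to permutation.

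The argument is essentially elementary and I do not anticipate any serious obstacle; the only delicate point is tracking the equality cases in the four inequalities and confirming that they correspond exactly to the two claimed configurations (up to the natural symmetries permutations and sign flips, under which $F_p$ is invariant). The boundary case $p = 2$ is trivial since $F_2(a) \equiv 1$ on the unit sphere, which is consistent with the fact that the two extremal values $n^{1-p/2}$ and $1$ collapse at $p=2$ and then exchange their ordering as $p$ crosses $2$.
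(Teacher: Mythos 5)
Your proof is correct, and it rests on the same core mechanism as the paper: the substitution $b_j=a_j^2$ and the convexity (for $p>2$) versus concavity (for $1<p<2$) of $t\mapsto t^{p/2}$ on the simplex $\{b_j\ge 0,\ \sum_j b_j=1\}$. The only difference is packaging: the paper invokes Karamata's inequality together with the majorization chain $\left(\tfrac1n,\ldots,\tfrac1n\right)\prec(a_1^2,\ldots,a_n^2)\prec(1,0,\ldots,0)$, which yields both bounds in one stroke, whereas you prove exactly those two special cases by hand --- Jensen's inequality replaces the left-hand majorization, and the maximum principle for convex functions on the simplex (respectively subadditivity of a concave function vanishing at $0$) replaces the right-hand one. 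Your version is more self-contained and makes the equality cases explicit; the paper's is shorter at the cost of citing the general majorization theorem. Either way the argument is sound.
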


Finding the extrema of $F_p$ under the constraints $\sum_{j=1}^na_j^2=1$ and $\sum_{j=1}^n a_j=0$ is of independent interest. In the recent article \cite{brazitikos2025sharp} (or see \cite{rivin2002counting} for the appearance in a combinatorial context)  the authors treated the problem for the closely related quantity, among other symmetric polynomials, 
\[
p_k(a)=\sum_{j=1}^na_j^k,
\]
i.e the power sum. They essentially proved (combining with a simple argument that reduces the extrema in triples of the form $\left(\underbrace{a,\ldots,a}_{\gamma_1},\underbrace{b,\ldots,b}_{\gamma_2},\underbrace{c,\ldots,c}_{\gamma_3} \right),$ where $\gamma_1a+\gamma_2b+\gamma_3c=0$, $\gamma_1a^2+\gamma_2b^2+\gamma_3c^2=1$ and $\gamma_1+\gamma_2+\gamma_3=n$) that the upper bound is achieved, for both $n$ even and odd, in the case where of all $a_i$ are  equal except one. When $n$ is even the lower bound, for $p_k$, occurs at the half plus-half minus vector as a direct application of the power mean inequality.

\section{Notation and background}\label{Notation and Background}
We endow $\mathbb{R}^n$ with the standard inner product $\langle x, y \rangle = \sum_{j=1}^n x_j y_j$ between two vectors $x = (x_1, \ldots, x_n)$ and $y = (y_1, \ldots, y_n)$ in $\mathbb{R}^n$ and denote by $\|x\|_2 = \sqrt{\langle x, x \rangle}$ the induced standard Euclidean norm. For $p\gr0$ and a vector $x=(x_1,\ldots,x_n)$ in $\mathbb{R}^n$ we define its $\ell_p$ norm by $\|x\|_p=\left(\sum_{j=1}^n|x_j|^p \right)^{1/p}$. The closed centered unit ball of $\mathbb{R}^n$ is denoted $B^n_2$ and for the unit sphere, we write $\mathbb{S}^{n-1} = \partial B^n_2$. 
Moreover, we write $e_1, \ldots, e_n$ for the standard basis vectors, $e_1 = (1, 0, \ldots, 0)$, $e_2 = (0, 1, 0, \ldots, 0)$ etc. As usual, for a set $A$ in $\mathbb{R}^n$, $A^\perp = \{ x \in \mathbb{R}^n : \langle x, a \rangle = 0 \ \forall a \in A \}$ is its orthogonal complement, with the convention that for a vector $u$ in $\mathbb{R}^n$, $u^\perp = \{u\}^\perp$ is the hyperplane perpendicular to $u$. The orthogonal projection onto an affine or linear subspace $H$ in $\mathbb{R}^n$ is denoted by $\text{Proj}_H$. Volume, i.e. $k$-dimensional Lebesgue measure in $\mathbb{R}^n$ is denoted by $\text{vol}_k(\cdot)$, identified with $k$-dimensional Hausdorff measure (normalised so that cubes with side-length 1 have volume 1).  Recall that a body in $\mathbb{R}^n$ is a compact set with nonempty interior.  If $0\in \text{int}(K)$, we define its polar as  $K^{\circ}= \left \{ y \in \mathbb{R}^n: \langle x,y \rangle  \ls 1 \,\,\, \text{for all $x \in K$}\right \}.$  Let $A\subset \mathbb{R}^n$ be non-empty and convex we define the support function $h_A:\mathbb{R}^n\rightarrow (-\infty,\infty]$ of $A$, non-empty and convex,  as $h_A(u)=\sup_{x\in A}  \langle x,u \rangle$ for $u\in \mathbb{R}^{n}$.

Let $\Delta_n$ denote the regular $n$-dimensional simplex of side length $\sqrt{2}$, which we view through its usual embedding in the hyperplane $\mathcal{H}:=\left \{ \sum_{i=1}^{n+1}x_i=1 \right\}$  in $\mathbb{R}^{n+1}$, namely
\[
\Delta_n = \left\{x\in\mathbb{R}^{n+1}: x_j\gr 0, \sum_{j=1}^{n+1}x_j=1\right\}.
\]
The set $\Delta_n$ has $n$-dimensional volume equal to $\frac{\sqrt{n+1}}{n!}$ and centroid at $\mathrm{bar}(\Delta_n):=\left(\frac{1}{n+1},\ldots,\frac{1}{n+1}\right)$. The normals to the facets  are given by permutation of $(-n,1\ldots,1)$. That is immediate since the centers of the facets are permutations of $\left(0,\frac{1}{n},\ldots,\frac{1}{n}\right)$ and by symmetry the vector pointing from$ \left(\frac{1}{n+1},\ldots,\frac{1}{n+1}\right)$ to $\left(0,\frac{1}{n},\ldots,\frac{1}{n}\right)$ is perpendicular to the facets.

\subsection{Majorization}
Given $x = (x_1, \ldots, x_n)$, we denote by $x^{*} = (x_1^{*}, \ldots, x_n^{*})$ its decreasing rearrangement, i.e.,
\[
x_1^{\ast} \ge x_2^{*} \ge \cdots \ge x_n^{\ast}.
\]

For any two vectors $x, y \in \mathbb{R}^n$, we say that $x$ is majorized by $y$, and write $x \prec y$, if
\[
\sum_{i=1}^n x_i = \sum_{i=1}^n y_i
\quad \text{and} \quad
\sum_{i=1}^k x_i^{*} \le \sum_{i=1}^k y_i^{*}
\quad \text{for every } k = 1, 2, \ldots, n-1.
\]

As a direct consequence, for every vector $a = (a_1, \ldots, a_n) \in \mathbb{R}^n_+$ such that $\sum_i a_i = 1$, we have
\begin{equation}\label{majorization seq}
\left( \frac{1}{n}, \ldots, \frac{1}{n} \right) \prec (a_1, \ldots, a_n) \prec (1, 0, \ldots, 0).
\end{equation}
A well established inequality in the theory of majorization is due to Karamata \cite{karamata1932inegalite}:
\begin{theorem}\label{Karamata}
Let $f:I\subset \mathbb{R}\rightarrow \mathbb{R}$ be a convex function on $I$. If $a_1,\ldots,a_n,b_1,\ldots,b_n\in I$   such that $(a_1,\ldots,a_n)\prec (b_1,\ldots,b_n)$, then 
\[
\sum_{j=1}^nf(a_j)\ls\sum_{j=1}^nf(b_j).
\]
\end{theorem}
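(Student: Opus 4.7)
The plan is the classical Abel-summation proof of Karamata's inequality. Since both sides of the desired inequality $\sum_j f(a_j)\ls \sum_j f(b_j)$ are invariant under permuting the indices on each sequence separately, I may reorder and assume that both sequences are already in decreasing order: $a_1\gr a_2\gr \cdots \gr a_n$ and $b_1\gr b_2\gr \cdots \gr b_n$. The majorization hypothesis then becomes
\[
S_k := \sum_{j=1}^k (b_j - a_j) \gr 0 \quad \text{for } k=1,\ldots, n-1, \qquad S_n = 0.
\]

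Next I would invoke convexity through the subgradient inequality: setting $c_j := f'_+(a_j)$, the right derivative of $f$ at $a_j$ (which exists on the interior of $I$ by convexity), one has
\[
f(b_j) \gr f(a_j) + c_j (b_j - a_j), \qquad j = 1,\ldots, n.
\]
Moreover, since $f'_+$ is non-decreasing on $I$ and the $a_j$ are arranged in decreasing order, the sequence $c_j$ is non-increasing: $c_1 \gr c_2 \gr \cdots \gr c_n$. Summing the $n$ subgradient inequalities and applying Abel summation gives
\[
\sum_{j=1}^n \bigl(f(b_j)-f(a_j)\bigr) \gr \sum_{j=1}^n c_j(b_j-a_j) = c_n S_n + \sum_{j=1}^{n-1} (c_j-c_{j+1})\, S_j.
\]
Because $S_n = 0$ and each product $(c_j - c_{j+1})\, S_j$ has both factors non-negative, the right-hand side is $\gr 0$, which is exactly the desired conclusion.

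I expect no substantive obstacle; the only delicate point is the possible non-differentiability of $f$, which is handled cleanly by working with the right-derivative (any coordinated selection from the subdifferential would serve equally well). An equivalent packaging avoids derivatives altogether by using the three-chord lemma: the slopes $\tfrac{f(a_j)-f(a_{j+1})}{a_j - a_{j+1}}$ are non-decreasing in $j$ when the $a_j$ are in decreasing order, which again supplies the required monotone auxiliary sequence. Either way the argument reduces to the Abel-summation identity above, and the role of the equality $S_n = 0$ (the ``total mass'' condition in the definition of majorization) is precisely to kill the endpoint term that would otherwise spoil the sign.
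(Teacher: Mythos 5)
Your argument is correct: it is the standard Abel-summation proof of Karamata's inequality, and every step checks out --- the reordering reduction, the subgradient inequality $f(b_j)\gr f(a_j)+c_j(b_j-a_j)$, the monotonicity of $c_j=f'_+(a_j)$ (non-increasing because $f'_+$ is non-decreasing and the $a_j$ are arranged decreasingly), and the summation by parts in which $S_n=0$ kills the boundary term while $(c_j-c_{j+1})S_j\gr0$ termwise. Note that the paper does not prove this statement at all: it quotes Karamata's inequality as a classical result, citing the original paper and Chapter II of Bhatia, so there is no in-paper argument to compare yours against. The only delicate point --- possible failure of $f'_+$ to exist (or be finite) at an endpoint of $I$ --- is one you correctly flag, and your fallback of replacing $c_j$ by chord slopes (e.g.\ $c_j=\frac{f(b_j)-f(a_j)}{b_j-a_j}$, which is monotone in $j$ by the three-chord lemma since both $a_j$ and $b_j$ decrease) closes it cleanly.
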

For concise exposition on majorization and Schur-convexity, we refer for instance to Chapter II of \cite{bhatia2013matrix}.

\subsection{Intrinsic volume and Quermassintegrals}
Minkowski's theorem, and definition of the mixed volumes, states that if $K_1,\ldots ,K_m$ are non-empty, compact convex
subsets of ${\mathbb R}^n$, then the volume of $\lambda_1K_1+\cdots +\lambda_mK_m$ is a homogeneous polynomial of degree $n$ in
$\lambda_i>0$. One can write
\begin{equation*}\operatorname{vol}_n(\lambda_1K_1+\cdots +\lambda_mK_m)=\sum_{1\ls i_1,\ldots ,i_n\ls m}
V(K_{i_1},\ldots ,K_{i_n})\lambda_{i_1}\cdots \lambda_{i_n},
\end{equation*}
where the coefficients $V(K_{i_1},\ldots ,K_{i_n})$ are invariant under permutations of their arguments. The coefficient $V(K_{i_1},\ldots ,K_{i_n})$
is the mixed volume of $K_{i_1},\ldots ,K_{i_n}$. In particular, if $K$ and $D$ are two convex bodies in ${\mathbb R}^n$
then the function $\operatorname{vol}_n(K+\lambda D)$ is a polynomial in $\lambda\in [0,\infty )$:
\begin{equation*}\operatorname{vol}_n(K+\lambda D)=\sum_{j=0}^n \binom{n}{j} V_{n-j}(K,D)\;\lambda^j,\end{equation*}
where $V_{n-j}(K,D)= V(K[n-j],D[j])$ is the $j$-th mixed volume of $K$ and $D$ (we use  the notation $D[j]$ for $D,\ldots ,D$ $j$-times).
If $D=B_2^n$ then we set $W_j(K):=V_{n-j}(K,B_2^n)=V(K[n-j], B_2^n[ j])$; this is the $j$-th quermassintegral of $K$.
The intrinsic volumes $V_j(K)$ of $K$ are defined for $0\ls j\ls n$ by
\begin{equation}\label{eq:intrinsic}V_j(K)=\frac{\binom{n}{j}}{\omega_{n-j}}W_{n-j}(K).\end{equation}
 Let $K,K_1,\ldots,K_{n-1}$ be convex bodies in $\mathbb{R}^{n}$. The \emph{mixed surface area measure} $S(K_1,\ldots,K_{n-1},\cdot)$ is a Borel measure in $\mathbb{S}^{n-1}$ uniquely determined such that 
\[
V(K_1,\ldots,K_{n-1},K)=\frac{1}{n}\int_{\mathbb{S}^{n-1}}h_{K}(u) dS(K_1,\ldots,K_{n-1},u).
\]
We define $S_j(K,\cdot)=S(K[j],B_2^n[n-1-j])$ and when appropriate $S(K,\cdot):=S_{n-1}(K,\cdot)$. Surface area measures are a fundamental concept in the theory of convex bodies, i.e.
nonempty compact convex subsets of $\mathbb{R}^n$. Given  a convex polytope $P$ in $\mathbb{R}^n$  its surface area measure is given by 
\begin{equation}\label{S_{n-1}(P)}
S(P,\cdot):=S_{n-1}(P,\cdot)=\sum_{u\in \mathcal{N}(P)}\operatorname{vol}_{n-1}(F_u)\delta_u,
\end{equation}
where $\mathcal{N}(P)$ denotes the set of all unit facet normals of $P$ and $\delta_u$ is the Dirac probability measure supported at $u$.

\subsection{Projection bodies and $L_p$-projection bodies}

Let $K$ be a convex body in $\mathbb{R}^n$ the projection body $\Pi K$ of $K$ is a symmetric convex body in $\mathbb{R}^n$ defined through the support function 
\[
h_{\Pi K}(a):=\frac{1}{2}\int_{\mathbb{S}^{n-1}}|\langle a,x\rangle|dS_{n-1}(K,x)
\]

Surface area measures lie at the very core of the Brunn-Minkowski theory.  Based on Firey’s $L_p$ addition  for convex bodies, Lutwak  showed that the classical Brunn-Minkowski theory can be extended to an $L_p$ Brunn-Minkowski theory. We refer to \cite{schneider2013convex,gardner2006geometric} for more information on the $L_p$ theory.

Let $1\ls p<\infty$ and $K$ be a convex body in $\mathbb{R}^n$ Lutwak, Yang and Zhang in \cite{lutwak2000lp} introduced the $L_p$-projection body $\Pi_pK$, under a different normalization, defined as
\begin{equation}\label{L_p proj body}
h_{\Pi_p K}(a)=\frac{1}{2}\int_{\mathbb{S}^{n-1}}|\langle a,v\rangle|^p\,dS_{p}(K,dv),
\end{equation}
for $a\in \mathbb{S}^{n-1}$, where the $L_p$ surface measure $S_p(K,\cdot)$, the analog of of the surface area measure in the $L_p$ Brunn-Minkowski,  is defined via
\[
dS_{p}(K,\cdot)=h_{K}^{1-p}\,dS_{n-1}(K,\cdot).
\]

 For a convex polytope $P$ with $0\in \text{int}(P)$, the $L_p$ surface area measure $S_p(P,\cdot)$ is given by 
\begin{equation}\label{S_{p}(P)}
S_{p}(P,\cdot)=\sum_{u\in \mathcal{N}^{\ast}(P)}h^{1-p}_{P}(u)\operatorname{vol}_{n-1}(F_u)\delta_u,
\end{equation}
where $\mathcal{N}^{\ast}(P)$ denotes the set of unit facet normals of $P$ corresponding to facets which
do not contain the origin.

\subsection{Formulas for volume of projections}

\begin{theorem}[Cauchy-Minkowski]\label{C-M}
    Let $K$   be a convex body in $\mathbb{R}^{n}$. Then for every unit vector $a \in \mathbb{S}^{n-1}$, we have 
    \[
    \vol_{n-1}(\operatorname{Proj}_{a^{\perp}}K)=\frac{1}{2}\int_{\mathbb{S}^{n-1}} \left| \langle a,\xi\rangle \right| \,dS(K,\xi).
    \]
\end{theorem}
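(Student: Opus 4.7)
The plan is the standard two-step proof: establish the formula for convex polytopes by a direct computation, then extend to arbitrary convex bodies by polytopal approximation together with weak convergence of surface area measures.

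For a convex polytope $P$ with unit facet normals $\mathcal{N}(P)$ and associated facets $F_u$, formula~\eqref{S_{n-1}(P)} turns the right-hand side into
\[
\frac{1}{2}\sum_{u\in\mathcal{N}(P)}|\langle a,u\rangle|\,\vol_{n-1}(F_u),
\]
so the task is to identify this quantity with $\vol_{n-1}(\operatorname{Proj}_{a^{\perp}}P)$. The first key step is a Jacobian computation: the orthogonal projection onto $a^{\perp}$, restricted to the facet hyperplane $u^{\perp}$, scales $(n-1)$-volume by the factor $|\langle a,u\rangle|$. To see this, choose an orthonormal basis $e_1,\ldots,e_{n-1}$ of $u^{\perp}$; the image vectors $e_i-\langle e_i,a\rangle a$ have Gram matrix $I-vv^{\top}$ with $v_i=\langle e_i,a\rangle$, whose determinant is $1-\|v\|^2=\langle a,u\rangle^2$ since $\|v\|^2=\|a\|^2-\langle a,u\rangle^2$.

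The second step is a visibility/transversality argument. For almost every $y\in\operatorname{Proj}_{a^{\perp}}P$, the line $y+\mathbb{R}a$ crosses $\partial P$ in exactly two points, one lying in the relative interior of a facet with $\langle a,u\rangle>0$ and the other in a facet with $\langle a,u\rangle<0$. Therefore
\[
\sum_{u\in\mathcal{N}(P)}\vol_{n-1}(\operatorname{Proj}_{a^{\perp}}F_u)=2\,\vol_{n-1}(\operatorname{Proj}_{a^{\perp}}P),
\]
which, combined with the Jacobian identity above, proves the polytopal case.

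For a general convex body $K$, approximate $K$ by polytopes $P_k\to K$ in the Hausdorff metric. Then $\operatorname{Proj}_{a^{\perp}}P_k\to\operatorname{Proj}_{a^{\perp}}K$ in Hausdorff metric as well, so their $(n-1)$-volumes converge; meanwhile, $S(P_k,\cdot)\to S(K,\cdot)$ weakly on $\mathbb{S}^{n-1}$ by a standard property of surface area measures, and integrating the continuous bounded function $\xi\mapsto|\langle a,\xi\rangle|$ transfers the polytopal identity to $K$. The main obstacle I anticipate is justifying the weak convergence of the surface area measures under Hausdorff approximation; this is delicate but classical and can be invoked from Schneider's monograph, while the remaining ingredients (the facet Jacobian and the covering-multiplicity-two argument) are elementary linear algebra and convex geometry.
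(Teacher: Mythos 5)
Your proof is correct and follows the same route the paper takes: the paper treats this as the classical Cauchy projection formula, giving only the informal polytope illustration (facet volumes scaled by $|\langle a, n(F)\rangle|$ and the multiplicity-two covering, as in \eqref{Cauchy-proj}) and citing the standard references for the general case. Your write-up simply makes that sketch rigorous — the Gram-matrix Jacobian computation, the almost-everywhere two-point transversality of the lines $y+\mathbb{R}a$ with $\partial P$, and the passage to general convex bodies via Hausdorff approximation together with weak convergence of surface area measures — all of which are sound and standard.
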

Let us illustrate intuitively this formula in the specific case of polytopes. Suppose we are given a convex polytope $P$ in $\mathbb{R}^n$ and we want to project it onto a hyperplane $a^\perp$, where $a$ is a unit vector. Let $\mathscr{F}_P$ be the set of facets of $P$. If $F \in \mathscr{F}_P$ then
\[
\text{vol}_{n-1}(\operatorname{Proj}_{a^\perp}(F)) = \text{vol}_{n-1}(F) \cdot |\langle a, n(F) \rangle|,
\]
where $n(F)$ is the unit outer-normal vector to $F$. Note that in $\operatorname{Proj}_{a^\perp}(P)$ every point is covered two times, so one gets the
following expression for the volume of projection
\begin{equation}\label{Cauchy-proj}
\text{vol}_{n-1}(\operatorname{Proj}_{a^\perp} P) = \frac{1}{2} \sum_{F \in \mathscr{F}_P} \text{vol}_{n-1}(F) \cdot |\langle a, n(F) \rangle|.
\end{equation}
 
The Cauchy–Minkowski formula~\ref{C-M} belongs to a broader family of integral representations known as \emph{Kubota’s formulas}.  
Kubota’s integral formula expresses the quermassintegrals of a convex body $K$ as averages of the volumes of its $(n-j)$-dimensional projections. For further background and detailed proofs, we refer the reader to \cite[Theorem~8.1.10 and (A.49)]{gardner2006geometric} or to \cite[Appendix~B]{ArtsteinAvidanGiannopoulosMilman2015}.

Let $F$ be a $m$-dimensional subspace of $\mathbb{R}^n$, for $1\ls m \ls n$. Then, for any convex body $K$ in $\mathbb{R}^n$ we have 
\begin{equation}\label{|U||P_EK|}
    \operatorname{vol}_{n-m}(L) \operatorname{vol}_m(\operatorname{Proj}_F K)=\binom{n}{m}V(K[m],L[n-m])
\end{equation}
where $L$ is any convex body in the subspace $F^{\perp}$. (See \cite[Theorem 5.3.1]{schneider2013convex})

Relation~\eqref{|U||P_EK|} now suggests that for any orthonormal system $u_1,\ldots,u_r$  
\begin{align*}\label{mixed vol formula for proj}
\operatorname{vol}_{n-r}(\operatorname{Proj}_{\text{span}^{\perp}\{u_1,\ldots,u_r\}}K)=\frac{n!}{(n-r)!}V(K[n-r],[0,u_1],\ldots,[0,u_r]),
\end{align*}
 where by $\text{span}\{ u_1,\ldots,u_n\}$ we denote the linear span of $u_1,\ldots,u_n$.

\section{Proofs}

As described earlier, we can assume that every affine subspace $H$ of $\mathcal{H}$ with relative codimension one passes through $\operatorname{bar}(\Delta_n)$, since the volume of projections is invariant under translations.  
Each such hyperplane $H$ extends to a codimension-one subspace of $\mathbb{R}^{n+1}$ by taking its affine hull of $H$ with the origin, yielding a hyperplane $a^\perp$ such that $\operatorname{bar}(\Delta_n)\in a^\perp$.  
Consequently, $a$ lies in $\mathcal{H}' = \Bigl\{ x \in \mathbb{R}^{n+1} : \sum_{i=1}^{n+1} x_i = 0 \Bigr\}.$

\begin{proof}[Proof of Theorem \ref{main thm}]
By  Cauchy's formula for polytopes \eqref{Cauchy-proj} we obtain
\begin{equation}\label{ncent_proj}
\operatorname{vol}_{n-1}\left(\operatorname{Proj}_{a^{\perp}\cap \mathcal{H}}\Delta_n \right)=\frac{1}{2}\frac{\sqrt{n}}{(n-1)!}\sum_{j=1}^{n+1} \left| \left\langle a+\frac{1}{n+1}\boldsymbol{1},\nu_i \right\rangle \right|,
\end{equation}
where $\boldsymbol{1}=(1,\ldots,1)$.
The unit normals  of  $\Delta_n$  are given by  $$\nu_i=\frac{1}{\sqrt{n(n+1)}}\left(1,\ldots,1,\underbrace{-n}_{i},1,\ldots,1\right)$$ for $i=1,\ldots,n+1$ and  since $\sum_{j=1}^{n+1} a_j = 0$, it follows that
\[
\sum_{j \neq i} a_j = -a_i,
\]
Therefore
\[
\left| \sum_{j \neq i} a_j - n a_i \right| = (n+1)|a_i|.
\]
This completes the proof.
\end{proof}

\begin{proof}[Proof of Proposition \ref{sum min}]
For $n = 3$, without loss of generality, assume that $a_3 \neq 0$ and set 
\[
x = \frac{a_1}{a_3}, \quad y = \frac{a_2}{a_3}.
\]
Then the desired inequality becomes
\[
\frac{|x| + |y| + 1}{\sqrt{x^2 + y^2 + 1}} \geq \sqrt{2},
\]
but since \( x + y + 1 = 0 \), this is equivalently written as
\[
\frac{|x| + |x + 1| + 1}{\sqrt{x^2 + x + 1}} \geq 2.
\]
This inequality is easily verified to be true.

We proceed by induction on $n$. Assume the statement holds for some $n$, we will prove that is also holds for $n+1$. Let $a_1, \ldots, a_{n+1} \in \mathbb{R}$ satisfy
\[
a_1 + \ldots + a_{n+1} = 0.
\]
By grouping the last two terms, observe that
\[
|a_1| + \ldots + |a_{n-1}| + |a_{n}| + |a_{n+1}| 
\geq |a_1| + \ldots + |a_{n-1}| + |a_{n} + a_{n+1}|.
\]
Applying the induction hypothesis to the $n$ terms $a_1, \ldots, a_{n-1}, a_{n} + a_{n+1}$ (which sum to zero), we obtain
\[
|a_1| + \ldots + |a_{n-1}| + |a_{n} + a_{n+1}| \geq \sqrt{2} \left( a_1^2 + \ldots + a_{n-1}^2 + (a_{n} + a_{n+1})^2 \right)^{1/2}.
\]
To guarantee that
\[
a_1^2 + \ldots + a_{n-1}^2 + (a_{n} + a_{n+1})^2 \geq a_1^2 + \ldots + a_{n-1}^2 + a_{n}^2 + a_{n+1}^2,
\]
we require $2a_{n}a_{n+1} \geq 0$, which holds if $a_{n}$ and $a_{n+1}$ are non-zero and have the same sign.

Therefore, by appropriately choosing $a_{n}$ and $a_{n+1}$ satisfying $a_{n}a_{n+1} \geq 0$, the inductive step is valid, completing the proof.
\end{proof}

\begin{proof}[Proof of Proposition \ref{sum max}]
When $n$ is even, applying the Cauchy--Schwarz inequality gives
\[
\sum_{j=1}^{n} |a_j| \leq \sqrt{n} \left( \sum_{j=1}^{n} a_j^2 \right)^{1/2} = \sqrt{n},
\]
since the sum of squares is given as $1$. Equality is attained when all components $|a_j|$ are equal and satisfy the constraints, which occurs at
\[
\left( \underbrace{\frac{1}{\sqrt{n}}, \ldots, \frac{1}{\sqrt{n}}}_{\frac{n}{2} \text{ times}}, \underbrace{-\frac{1}{\sqrt{n}}, \ldots, -\frac{1}{\sqrt{n}}}_{\frac{n}{2} \text{ times}} \right).
\]

For the odd case, without loss of generality, let $a_1, \ldots, a_k \gr 0$ and $a_{k+1}, \ldots, a_{n} \le 0$. Then, by two consecutive applications of the Cauchy--Schwarz inequality, one obtains

\begin{equation}\label{cs1}
    \sum_{i=1}^{k}a_i^2  \gr\frac{\left(\sum_{i=1}^ka_i \right)^2}{k} \quad \text{and}\quad 
     \sum_{i=k+1}^{n}a_i^2  \gr\frac{\left(\sum_{i=k+1}^{n} a_i \right)^2}{n-k}.
\end{equation}
Let $S := \sum_{i=1}^k a_i$. Summing relations~\eqref{cs1}, we obtain
\begin{equation*}\label{cs_conc}
S \ls \sqrt{\frac{k(n-k)}{n}},
\end{equation*}
since $\sum_{i=1}^{n} a_i = 0$.
 Then, 
\begin{equation}\label{concl}
    \sum_{i=1}^{n}|a_i|= 2S\ls2 \sqrt{\frac{k(n-k)}{n}},
\end{equation}
and the right hand side of \eqref{concl} is clearly maximized for $k=(n-1)/2$ or $k=(n+1)/2$. Equality is attained when $a_1=\ldots=a_{(n-1)/2}=\pm\sqrt{\frac{n+1}{n(n-1)}}$ and $a_{(n-1)/2+1}=\ldots=a_{n}=\mp\sqrt{\frac{n-1}{n(n+1)}}$.

\end{proof}

\begin{proof}[Proof of Theorem \ref{formula for planar cube}]
Since the volume of the projection is invariant under translations we can instead work with the shifted unit cube $[0,1]^n=\sum_{i=1}^n[0,e_i]$. From~\eqref{|U||P_EK|} and the multilinearity of the mixed volume we obtain,
\begin{align*}
\operatorname{vol}_{n-2}(\operatorname{Proj}_{\text{span}^{\perp}\{u,v\}}[0,1]^n)&=\frac{n!}{(n-2)!}V\left( [0,1]^n[n-2],[0,u],[0,v]\right)\\
&=\frac{n!}{(n-2)!}\sum_{i_1=1}^n\cdots\sum_{i_{n-2}=1}^nV \left([0,e_{i_1}],\ldots,[0,e_{i_{n-2}}],[0,u],[0,v]\right)                \\
&=\frac{1}{(n-2)!}\sum_{i_1=1}^n\cdots\sum_{i_{n-2}=1}^n|\det(e_{i_1},\ldots,e_{i_{n-2}},u,v)|            \\
&=\sum_{\substack{I \subseteq [n] \\|I|=n-2}}|\det(u,v,(e_i)_{i \in I})|\\
&=\sum_{1\ls i<j\ls n}|u_iv_j-u_j v_i|.
\end{align*}
\end{proof}

In order to prove Proposition~\ref{bound planar cube} we need the following Lemma.
\begin{lemma}\label{matrix lemma}\cite[Lemma 7]{EinollahzadehNematollahi+2025}
Let $d_1,\ldots,d_n\gr0$ and $\omega_1,\ldots,\omega_n\in \{z\in \mathbb{C}: |z|=1\}$. Then,
\[
\sum_{1\ls i,j \ls n}d_id_j|\omega_i-\omega_j|\ls \frac{2}{n}\cot\left(\frac{\pi}{2n}\right) \left(\sum_{i=1}^nd_i \right)^2.
\]
Equality holds if $\{\omega_1,\ldots,\omega_n\}$  is the set of all $n$-th roots of unity and $d_1=\cdots=d_n$.
\end{lemma}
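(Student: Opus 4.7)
The plan is to reduce the bound to a Fourier-analytic inequality on the unit circle. Setting $\omega_j = e^{i\theta_j}$ and $S := \sum_j d_j$, and introducing the Fourier moments $c_k := \sum_j d_j e^{ik\theta_j}$ of the discrete measure $\mu = \sum_j d_j \delta_{\omega_j}$ (so $c_0 = S$), the classical identity
\[
2\bigl|\sin(t/2)\bigr| = \frac{4}{\pi} - \frac{8}{\pi}\sum_{k\ge 1}\frac{\cos(kt)}{4k^2 - 1}
\]
combined with $|\omega_i - \omega_j| = 2|\sin((\theta_i - \theta_j)/2)|$ gives
\[
\sum_{i,j} d_i d_j |\omega_i - \omega_j| = \frac{4S^2}{\pi} - \frac{8}{\pi}\sum_{k\ge 1}\frac{|c_k|^2}{4k^2 - 1},
\]
so that the desired bound is equivalent to the Fourier tail lower bound
\[
\sum_{k\ge 1}\frac{|c_k|^2}{4k^2 - 1} \ge S^2\left(\frac{1}{2} - \frac{\pi}{4n}\cot\frac{\pi}{2n}\right).
\]

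Next, I would verify both the equality case and the value of the constant. For $\mu$ uniform of mass $S/n$ on the $n$-th roots of unity, $|c_k|^2 = S^2$ if $n \mid k$ and $c_k = 0$ otherwise; the partial-fraction identity $\tfrac{1}{4m^2n^2 - 1} = \tfrac12\bigl(\tfrac{1}{2mn-1} - \tfrac{1}{2mn+1}\bigr)$ combined with the cotangent series $\sum_{m\ge 1}(m^2 - z^2)^{-1} = (1 - \pi z\cot\pi z)/(2z^2)$ at $z = 1/(2n)$ evaluates the tail to exactly $S^2\bigl(\tfrac12 - \tfrac{\pi}{4n}\cot\tfrac{\pi}{2n}\bigr)$, confirming that this $\mu_0$ saturates the inequality.

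The main obstacle is establishing the Fourier lower bound for general $\mu$, the only structural constraint being that $\mu$ has at most $n$ atoms. My plan is a Delsarte-style duality: construct an even real function $T$ on $[-\pi,\pi]$ satisfying (i) $T(\theta) \ge 2|\sin(\theta/2)|$ for all $\theta$, (ii) $T(2\pi k/n) = 2|\sin(\pi k/n)|$ for each $k$, (iii) $\widehat{T}(0) = \tfrac{2}{n}\cot\tfrac{\pi}{2n}$, and (iv) $\widehat{T}(k) \le 0$ for all $k \ne 0$. Once such a $T$ is in hand, the inequality follows from the chain
\[
\sum_{i,j} d_i d_j |\omega_i - \omega_j| \le \sum_{i,j} d_i d_j T(\theta_i - \theta_j) = \sum_{k}\widehat{T}(k) |c_k|^2 \le \widehat{T}(0) S^2,
\]
and saturation forces $c_k = 0$ for every $k$ with $\widehat{T}(k) < 0$, pinning $\mu$ down (up to rotation) as uniform mass on the $n$-th roots of unity. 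The delicate technical point is the simultaneous enforcement of the majorisation (i) and the sign pattern (iv); a natural strategy is to write $T(\theta) - 2|\sin(\theta/2)| = (1 - \cos n\theta)\,R(\theta)$ with $R \ge 0$, which automatically enforces (ii) at the equispaced nodes, and to choose $R$ so that the Fourier coefficients of the correction preserve condition (iv) when added to the truncated Fourier series of $2|\sin(\theta/2)|$.
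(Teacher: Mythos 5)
Your Fourier reformulation is correct: with $c_k=\sum_j d_j e^{ik\theta_j}$ and $S=\sum_j d_j$ one indeed has $\sum_{i,j}d_id_j|\omega_i-\omega_j| = \frac{4S^2}{\pi}-\frac{8}{\pi}\sum_{k\ge1}\frac{|c_k|^2}{4k^2-1}$, and your evaluation of the tail for the uniform measure on the $n$-th roots of unity, via the partial fractions and the cotangent series, does give exactly $S^2\bigl(\tfrac12-\tfrac{\pi}{4n}\cot\tfrac{\pi}{2n}\bigr)$, so the constant and the equality configuration check out. (For the record, the paper does not prove this lemma itself; it imports it from the cited reference, so there is no in-paper proof to compare against.)

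However, the mechanism you propose for the general case cannot work: conditions (i) and (iii) on your auxiliary function $T$ are mutually contradictory. If $T(\theta)\ge 2|\sin(\theta/2)|$ for all $\theta$, then $\widehat{T}(0)=\frac{1}{2\pi}\int_{-\pi}^{\pi}T(\theta)\,d\theta \ge \frac{1}{2\pi}\int_{-\pi}^{\pi}2|\sin(\theta/2)|\,d\theta = \frac{4}{\pi}$, whereas you require $\widehat{T}(0)=\frac{2}{n}\cot\frac{\pi}{2n}$, which is strictly smaller than $\frac{4}{\pi}$ since $\cot x<1/x$ on $(0,\pi/2)$. The deeper reason is that your chain $\sum_{i,j} d_id_j|\omega_i-\omega_j|\le\sum_{i,j} d_id_jT(\theta_i-\theta_j)=\sum_k\widehat{T}(k)|c_k|^2\le\widehat{T}(0)S^2$ never uses the hypothesis that the measure has at most $n$ atoms; if it produced the constant $\frac{2}{n}\cot\frac{\pi}{2n}$ it would do so for arbitrary nonnegative measures, which is false, as a near-uniform measure on the circle yields approximately $\frac{4}{\pi}S^2>\frac{2}{n}\cot\bigl(\frac{\pi}{2n}\bigr)S^2$. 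Equivalently, the needed tail bound $\sum_{k\ge1}\frac{|c_k|^2}{4k^2-1}\ge S^2\bigl(\tfrac12-\tfrac{\pi}{4n}\cot\tfrac{\pi}{2n}\bigr)$ is genuinely a statement about measures supported on at most $n$ points, and a Delsarte-type global majorant is structurally blind to that constraint. The central step therefore has an unfixable gap as planned; one needs an argument that exploits the cardinality of the support directly (for instance the Fejes T\'oth-style ordering and concavity-of-sine argument used for such sums of chord lengths, which is the route taken in the cited source).
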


\begin{proof}[Proof of Proposition \ref{bound planar cube}]
For the lower bound notice that Lagrange's identity suggests 
\begin{equation}\label{Lagrange identity}
\sum_{1\ls i<j\ls n}( u_iv_j-u_jv_i)^2=\|u\|_2^2\|v\|_2^2-\langle u,v \rangle ^2=1.
\end{equation}
Thus, it is immediate that
\[
 \sum_{1\ls i<j\ls n} |u_iv_j-u_jv_i| \gr \left( \sum_{1\ls i<j\ls n} |u_iv_j-u_jv_i|^2 \right)^{1/2}=1.
\]
Equality occurs, when exactly one of $|u_iv_j-u_jv_i|$ is non-zero, say $(p,q)$ with $p\neq q$, that is $|u_iv_j-u_jv_i|=0$ for $(i,j)\neq (p,q)$ and $|u_pv_q-u_qv_p|\neq 0$. The last condition says that $(u_p,v_p)$ cannot be collinear with $(u_q,v_q)$. But the first condition yields that $(u_i,v_i)$, for $i\neq p,q$, is colinear with both $(u_p,v_p)$ and $(u_q,v_q)$. This leads to $(u_i,v_i)=0$ for each $i \neq p,q$ and thus $u_p^2=1$ and $v_q^2=1$ or $v_q^2=1$ and $v_p^2=1$.

For the upper bound we use the following trigonometric parametrization: Since $(u_i,v_i)\in \mathbb{R}^2$  then $(u_i,v_i)=r_i(\cos(\theta_i),\sin(\theta_i))$ with $r_i\gr0$ and $0\ls \theta\ls 2\pi$.  Then, $u_i v_j - u_j v_i = r_i r_j \sin(\theta_j-\theta_i),$ so the quantity of interest becomes
\begin{equation}\label{trigonometric quant}
  \sum_{1\le i<j\le n} r_i r_j \big|\sin(\theta_j-\theta_i)\big|,
\end{equation}
where we also have $\sum_{i=1}^nr_i^2=2$ since $u,v$ are unit. Applying now Lemma \ref{matrix lemma} for $d_i=r_i$ and $\omega_i=e^{2i\theta_i}\in \mathbb{S}^1$ we obtain 
\begin{equation}
   \sum_{1\le i<j\le n} r_i r_j \big|\sin(\theta_j-\theta_i)\big|\ls\frac{1}{2n} \cot\left( \frac{\pi}{2n} \right)\left(\sum_{i=1}^nr_i \right)^2\ls \cot\left( \frac{\pi}{2n} \right),
\end{equation}
where we used Cauchy--Schwarz in the last step. 
\end{proof}

\begin{proof}[Proof of Theorem \ref{Formulas for L_p proj}]
The $L_p$-measure of polytopes of~\eqref{S_{p}(P)} suggests that for a polytope $P$ in $\mathbb{R}^n$ with $0\in \text{int}(P)$ :
\begin{equation}\label{h_{piP} polytope}
 h_{\Pi_pP}^p(a)  =\frac{1}{2}\sum_{F\in \mathcal{F}_{n-1}}\operatorname{vol}_{n-1}(F)|\langle a,\nu_F \rangle|^p \,h_P^{1-p}(\nu_F),
\end{equation}
where $\nu_F$ is the unit normal to the facet $F$.

For the unit cube $Q_n = \left[-\tfrac{1}{2}, \tfrac{1}{2}\right]^n$, formula~\eqref{L_p cube} follows immediately, since all facets $F$ have unit $(n-1)$-dimensional volume, the outer unit normals to the facets are given by the $2n$ vectors $\{\pm e_j : 1 \le j \le n\}$, and the support function is $h_{Q_n}(u) = \tfrac{1}{2} \sum_{i=1}^n |u_i|$.

The cross-polytope $B_1^n = \operatorname{conv}\{\pm e_1, \ldots, \pm e_n\}$ has $2^n$ congruent simplicial facets, each of $(n-1)$-dimensional volume $\tfrac{\sqrt{n}}{(n-1)!}$, with outer unit normals 
$\tfrac{1}{\sqrt{n}}(\pm 1, \ldots, \pm 1)$, 
and support function 
$h_{B_1^n}(u) = \max_{i=1,\ldots,n} |u_i|$. 
Thus,

\[
h^p_{B_1^n}(a)=\frac{1}{2(n-1)!}\sum_{\varepsilon\in \{-1,1\}^n}|\langle a,\varepsilon\rangle|^p=\frac{2^{n-1}}{(n-1)!}\E \left|\sum_{j=1}^na_j \varepsilon_j \right|^p,
\]
where the expectation is over independent random signs $\varepsilon_j$, $\mathbb{P}(\varepsilon_j=\pm1)=
1/2$.

Finally, the centered simplex 
$\Delta_c^n := \Delta_n - \operatorname{bar}(\Delta_n) \subseteq \mathcal{H}' := \{\, \sum_{i=1}^{n+1} x_i = 0 \,\} \subset \mathbb{R}^{n+1}$ 
It is easy to check that the vertices  are permutations of $\frac{1}{n+1}(n,-1,\ldots,-1)$ and  that  permutations of $$\left( -\sqrt{\frac{n}{n+1}},\sqrt{\frac{1}{n(n+1)}},\ldots\sqrt{\frac{1}{n(n+1)}}\right)$$ are the outer unit normals to the facets of $\Delta_c^n$. The support function is given by $h_{\Delta_c^n}(u)=\max_{i=1,\ldots,n+1}\langle u,x_i \rangle$, where $x_i$ are the vertices, the formula~\eqref{L_p simp} follows similarly as in proof of Theorem~\ref{main thm}.
\end{proof}

\begin{proof}[Proof of Propostion \ref{bounds unc}]
The function $x \mapsto |x|^{p/2}$ is convex for $p \ge 2$ and $x \in \mathbb{R}$, and concave for $p \le 2$ and $x \ge 0$. The results follows immediately from Karamata's inequality~\ref{Karamata} since~\eqref{majorization seq} suggests $$\left(\frac{1}{n},\ldots,\frac{1}{n} \right)\prec (a_1^2,\ldots,a_n^2)\prec (1,0,\ldots,0).$$
\end{proof}

\noindent

{\bf Acknowledgements} The  author acknowledges support by the Hellenic Foundation for Research and Innovation (H.F.R.I.) under the call “Basic research Financing (Horizontal support of all Sciences)” under the National Recovery and Resilience Plan “Greece 2.0” funded by the European Union–NextGeneration EU (H.F.R.I. Project Number:15445). The author is much obliged to Silouanos Brazitikos for many fruitful discussions and for his help with the final presentation of this work, and is also grateful to Dimitris--Marios Liakopoulos for helpful communication.

\bibliographystyle{amsplain}
\bibliography{citations}

\end{document}